\documentclass[11pt,reqno]{amsart}
\usepackage[T1]{fontenc}
\usepackage{lmodern}
\usepackage{amsmath,amssymb,mathtools}
\usepackage{mathrsfs}
\usepackage{microtype}
\usepackage{graphicx}
\usepackage{enumitem}
\usepackage[a4paper,margin=28mm]{geometry}
\usepackage{xcolor}
\definecolor{linkblue}{RGB}{0,0,255}
\usepackage[colorlinks=true,linkcolor=linkblue,citecolor=linkblue,urlcolor=linkblue]{hyperref}
\numberwithin{equation}{section}
\newtheorem{theorem}{Theorem}[section]
\newtheorem{lemma}[theorem]{Lemma}
\newtheorem{assumption}[theorem]{Assumption}
\newtheorem{proposition}[theorem]{Proposition}
\newtheorem{corollary}[theorem]{Corollary}
\theoremstyle{definition}
\newtheorem{definition}[theorem]{Definition}
\theoremstyle{remark}
\newtheorem{remark}[theorem]{Remark}

\newcommand{\Tmax}{T_{\max}}

\setlist[enumerate]{leftmargin=*,label=\textup{(\roman*)}}
\allowdisplaybreaks[1]

\newcommand{\dd}{\,\mathrm{d}}

\title[Blow-up for wave equations with nonlinear damping]{Finite-Time Blow-Up for Wave Equations with Nonlinear \(p\)-Laplacian Damping}
\author[M.  Liao]{Menglan Liao}
\address{(M. Liao)School of Mathematics, Hohai University, Nanjing, 210098, China \&
Laboratory of Mathematical Modeling and Intelligent Computing for Water Systems, Hohai University, Nanjing, 211100, China.}
 \email{liaoml@hhu.edu.cn}
 
\author[M. Zhang]{Mingxue Zhang}
\address{(M. Zhang)Institute for Math and AI, Wuhan University, Wuhan, 430072, China. }
 \email{zhangmingxuex@163.com}
  
\thanks{This work was supported by the National Natural Science Foundation of China (12401290) and the Natural Science Foundation of Jiangsu Province (BK20230946). }

\subjclass[2020]{Primary 35B44; Secondary 35L72, 35D30.}
\keywords{nonlinear strong damping; \(p\)-Laplacian; energy weak solution;
finite-time blow-up; potential well}
\date{}

\begin{document}
\begin{abstract}
We consider a wave equation with linear elasticity, nonlinear \(p\)-Laplacian damping acting on the velocity, and a focusing power source in a bounded three-dimensional domain. For \(2<p<r+1\leq6\), we prove finite-time blow-up in the natural energy norm for initial data in \(H_0^1(\Omega)\times L^2(\Omega)\) lying in the unstable potential well below its depth. This includes every nonzero initial state with nonpositive total energy and requires no sign condition on the initial displacement--velocity pairing. For positive initial energy, we establish a sufficient condition for finite-time blow-up involving the \(L^2\) pairing of the initial velocity with a fixed approximation of the initial displacement. 
The proof combines a shifted displacement multiplier, an auxiliary quantity built from the time integral of the spatial \(L^p\)-norm of the velocity gradient, and a weighted auxiliary functional.
Linking the weight parameter to the initial value of the cumulative quantity allows us to absorb the mixed terms in the auxiliary functional estimate. Together with the shifted displacement multiplier, this yields upper bounds for the maximal existence time in both ranges \(2<p\leq3\) and \(p>3\), without assuming \(W_0^{1,p}(\Omega)\) regularity of the initial displacement.
\end{abstract}
\maketitle

\section{Introduction}
We study finite-time blow-up for a wave equation with nonlinear
strong damping and a focusing source. Let
$\Omega\subset\mathbb R^3$ be a bounded domain with $C^2$ boundary.
Consider
\begin{equation}\label{eq1}
 \begin{cases}
 u_{tt}-\Delta u-\Delta_pu_t=f(u),
       &x\in\Omega,\quad t>0,\\
 u=0,&x\in\partial\Omega,\quad t>0,\\
 u(0)=u_0,\quad u_t(0)=u_1,&x\in\Omega,
 \end{cases}
\end{equation}
where
\[
 \Delta_pw=\operatorname{div}(|\nabla w|^{p-2}\nabla w),
 \quad p'=\frac{p}{p-1}.
\]
After recalling the local theory, we consider the power source
\[
 f(u)=|u|^{r-1}u,\quad
 2<p<r+1\leq6,\quad
 (u_0,u_1)\in H_0^1(\Omega)\times L^2(\Omega).
\]
The condition $r+1\leq6$ ensures that the source potential
is finite on $H_0^1(\Omega)$.
For $3<r\leq5$, the source is supercritical with respect to
local Lipschitz continuity from $H_0^1(\Omega)$ into $L^2(\Omega)$.
The endpoint $r+1=6$ is critical for the Sobolev embedding
of the displacement energy space.

Strongly damped wave equations arise naturally in the mathematical
description of viscoelastic materials, where the stress depends on
both the strain and its rate of change. 
A basic example is
\[
 u_{tt}-\Delta u-\alpha\Delta u_t=f(u),
 \quad \alpha>0,
\]
with homogeneous Dirichlet boundary conditions.
The Kelvin--Voigt term $-\alpha\Delta u_t$ dissipates energy
through the velocity gradient.
Webb \cite{Webb1980} used analytic semigroup theory to prove
global existence and uniqueness of strong solutions under
suitable dissipativity assumptions.
For defocusing nonlinearities, Ang and Dinh
\cite{AngDinh1988} used successive linearizations to prove
local existence and uniqueness, and Galerkin and energy
estimates to obtain global existence and exponential decay
under stronger assumptions. 
In contrast, for focusing sources the solution may blow up in finite time, and two standard approaches to studying global existence versus blow-up are the potential well method and the concavity method. 
Sattinger \cite{Sattinger1968} and Payne and Sattinger
\cite{PayneSattinger1975} used variational thresholds and
invariant sets to study global existence and instability.
Levine \cite{Levine1974a,Levine1974b} used concavity
inequalities to prove finite-time blow-up.
Levine and Serrin \cite{LevineSerrin1997} developed
an abstract nonexistence theory for quasilinear evolution
equations with dissipation.

For semilinear wave equations with linear strong and weak
damping, Gazzola and Squassina \cite{GS2006} combined
potential well arguments, energy estimates, and the
concavity method to study boundedness of global solutions,
global existence for stable data, and blow-up for suitable
unstable data, including data with arbitrarily large energy.
Zu, Guo, and Gao \cite{ZGG2022} developed energy estimates
and obtained decay, high-energy blow-up, and lifespan bounds.
Linear strong damping has also been studied with nonlinear
elasticity. Pei, Rammaha, and Toundykov \cite{PRT2015}
considered
\[
 u_{tt}-\Delta_pu-\Delta u_t=f(u),
 \quad 2<p<3,
\]
in a bounded three-dimensional domain with homogeneous
Dirichlet boundary conditions.
They used Galerkin approximations and compactness to
construct local weak solutions for supercritical sources.
They also proved global existence under suitable source
restrictions and blow-up for negative initial energy.
Kass and Rammaha \cite{KR2019} used Galerkin approximations
and monotone operator theory to construct weak solutions
with interior and boundary sources, and obtained
global-existence and negative-energy blow-up results.
For logarithmic sources, Yang and Han \cite{YH2022}
combined Nehari-manifold and concavity arguments to prove
blow-up below the potential-well depth. They also obtained
blow-up criteria above that threshold and upper and lower
bounds for the blow-up time.
For power sources, Zu, Sun, and Wu \cite{ZSW2023}
derived energy estimates and established global-existence
and blow-up results, including blow-up for suitable data
with arbitrarily large positive energy and lifespan estimates.

Linear strong damping has two useful properties.
Testing against $u_t$ gives the quadratic dissipation
$\|\nabla u_t\|_2^2$, while testing against $u$ gives
\[
 \langle-\Delta u_t(t),u(t)\rangle
 =\frac12\frac{\mathrm d}{\mathrm dt}
       \|\nabla u(t)\|_2^2
 \quad\text{for a.e. }t\in(0,T),
\]
under the corresponding energy regularity.
Thus the damping contribution becomes an exact time
derivative, which supports energy and concavity estimates.
Its linearity also avoids an additional nonlinear weak
limit in approximation schemes for nonlinear elasticity.

For problem \eqref{eq1}, Rammaha and Wilstein
\cite{RW2012} used monotone operator theory and approximation
arguments to establish local existence, uniqueness,
continuous dependence, and the energy identity.
They also proved global existence when the damping
exponent dominates the source growth.
Rammaha, Toundykov, and Wilstein \cite{RTW2012}
used Nehari-manifold analysis and energy estimates to
prove global existence in the stable potential well,
beyond that growth restriction, and derived energy decay
under additional assumptions.
These works provide the local and variational frameworks
used here for the same Dirichlet problem. 
The results in \cite{RTW2012,RW2012} establish
well-posedness and stabilization, but do not provide
blow-up criteria for this Dirichlet problem with a
focusing power source. 
Two difficulties arise in proving blow-up for problem 
\eqref{eq1}: 
\begin{enumerate}
  \item First, the elastic energy controls $\|\nabla u\|_2$,
whereas the damping requires gradients in $L^p(\Omega)$.
Since $H_0^1(\Omega)$ is not contained in
$W_0^{1,p}(\Omega)$ for $p>2$, the displacement $u$
is not automatically an admissible test function.
  \item Second, even when testing against $u$ is justified,
the damping contribution is
\[
 \int_\Omega |\nabla u_t|^{p-2}
       \nabla u_t\cdot\nabla u\,\mathrm dx.
\]
This term is not a time derivative of the quadratic
elastic energy. Its direct estimate involves
$\|\nabla u_t\|_p^{p-1}\|\nabla u\|_p$,
but the natural energy does not control
$\|\nabla u\|_p$.
The dissipation controls
$\int_0^t\|\nabla u_s\|_p^p\,\mathrm ds$,
which alone does not provide the missing spatial
regularity for rough initial displacements.
\end{enumerate}

Additional $W_0^{1,p}(\Omega)$ regularity of $u_0$
resolves the testing issue but still leaves the nonlinear
damping term to be estimated.
Our aim is to address both difficulties and establish
blow-up criteria throughout $2<p<r+1\leq6$ while retaining
the initial-data space
$H_0^1(\Omega)\times L^2(\Omega)$. 
The proof uses three ingredients:
\begin{enumerate}
  \item First, we introduce the shifted displacement
$v(t)=u(t)-u_0+v_0$, where $v_0$ is a fixed smooth
approximation of $u_0$.
The regularity of $u(t)-u_0$ makes $v$ an admissible
test function, and the approximation error is controlled
in $H_0^1(\Omega)$ and $L^{r+1}(\Omega)$.
  \item Second, we introduce
$\mathcal R(t)=\mathcal R_0+
\int_0^t\|\nabla u_s(s)\|_p\,\mathrm ds$,
where $\mathcal R_0\geq\max\{1,\|\nabla v_0\|_p\}$.
This quantity controls $\|\nabla v(t)\|_p$ and
expresses the damping estimate in terms of
$\mathcal R(t)$ and $\mathcal R'(t)$,
with $(\mathcal R'(t))^p$ equal to the energy dissipation rate.
  \item Third, we construct a weighted auxiliary functional
from a shifted negative energy, the pairing $(v,u_t)$,
and a correction depending on $\mathcal R(t)$.
Choosing the weight in relation to $\mathcal R_0$
allows the mixed terms to be absorbed for both
$2<p\leq3$ and $p>3$.
Once its initial positivity is established,
a superlinear differential inequality yields
finite-time blow-up and explicit lifespan bounds.

\end{enumerate}

The argument gives three blow-up results.
First, negative total energy implies finite-time
blow-up without a sign condition on $(u_0,u_1)$.
Second, the same conclusion holds for initial data in
the unstable potential-well set $\mathscr W_2$
with $E(0)<d$, including $0\leq E(0)<d$
without an additional condition on the initial pairing.
Third, for arbitrary positive initial energy,
we obtain a sufficient blow-up criterion involving
$(v_0,u_1)$ and the approximation error $u_0-v_0$.
All three results apply to data in
$H_0^1(\Omega)\times L^2(\Omega)$ satisfying the
respective hypotheses and give explicit upper bounds
for the maximal existence time.

The paper is organized as follows.
Section~\ref{sec2} recalls the weak-solution framework, energy
identity, and continuation alternative, introduces the
potential well, and states the main results.
Section~\ref{sec3} develops the shifted multiplier and auxiliary
functional estimates and proves the three blow-up theorems.
The appendix provides the product rule for weak solutions
and derives the continuation alternative from the local
construction and uniqueness.

\section{Preliminaries and main results}\label{sec2}

\subsection{Weak solutions, energy identity, and continuation}

We write \(\|w\|_\ell=\|w\|_{L^\ell(\Omega)}\) and
\((w,z)=(w,z)_{L^2(\Omega)}\). The brackets
\(\langle\cdot,\cdot\rangle\) denote the duality pairing
between a Sobolev space and its dual, with the dual element
written first. By Poincar\'e's inequality, we use the equivalent
norms \(\|\nabla w\|_2\) on \(H_0^1(\Omega)\) and
\(\|\nabla w\|_p\) on \(W_0^{1,p}(\Omega)\).

\begin{assumption}[Finite-energy local framework]\label{ass:local}
Assume \(2<p<\infty\), \(1\leq r\leq5\), and \(f\in C^1(\mathbb R)\).
For \(|s|\geq1\), suppose
\[
 |f(s)|\leq b_0|s|^r,\quad
 |f'(s)|\leq b_1|s|^{r-1},
\]
where \(b_0,b_1>0\). If \(3<r\leq5\), assume also
\(f\in C^2(\mathbb R)\) and
\(|f''(s)|\leq b_2|s|^{r-2}\) for \(|s|\geq1\).
The initial data belong to \(H_0^1(\Omega)\times L^2(\Omega)\).
\end{assumption}
This is the part of \cite[Assumption~1.1]{RW2012} needed here.
For \(2<p<3\), its source restriction is automatic because
\(r\leq5<8-6/p\); for \(p\geq3\), one has \(r\leq5<6\).
The additional regularity of \(f\) is used in the cited uniqueness theory.
In this range, \(|F(s)|\leq C(1+|s|^{r+1})\), so the total energy is
finite for every initial state under consideration.

\begin{definition}[Weak solution]\label{def:weak}
A function $u$ is said to be a weak solution of problem \eqref{eq1} on $[0,T]$ if $u\in C([0,T],\mathrm{H}_0^1(\Omega))$, $u_t\in C([0,T],L^2(\Omega))\cap L^p(0,T,\mathrm{W}_0^{1,p}(\Omega))$, $(u(0),u_t(0))=(u_0,u_1)$ and, for all $0\le t\le T$, $u$ satisfies:
\begin{equation}
\label{eq2}
\begin{split}
&\int_0^t\int_\Omega \big(-u_t\varphi_t+\nabla u\cdot\nabla\varphi\big)\,dxds
+\left.\int_\Omega u_t\varphi\right|_{s=0}^{s=t}dx\\
&\quad+\int_0^t\int_\Omega |\nabla u_t|^{p-2}\nabla u_t\cdot\nabla\varphi\,dxds
=\int_0^t\int_\Omega f(u)\varphi\,dxds,
\end{split}
\end{equation}
for all test functions $\varphi\in H^1(0,T;L^2(\Omega))\cap L^p(0,T;\mathrm{W}_0^{1,p}(\Omega))$.
\end{definition}

We use the following consequences of
\cite[Theorems~1.2--1.3]{RW2012}.

\begin{theorem}[Local theory in the finite-energy range]\label{thm:local}
Under Assumption~\ref{ass:local}, problem \eqref{eq1} has a unique
local weak solution on \([0,T]\) for some \(T>0\).
It satisfies
\begin{equation}\label{eq3}
 E(t)+\int_{t_0}^t\|\nabla u_t(s)\|_p^p\,\dd s=E(t_0),
 \quad 0\leq t_0\leq t\leq T,
\end{equation}
where
\begin{equation}\label{eq4}
 E(t)=\mathcal E(t)-\int_\Omega F(u(t))\,\dd x,\quad
 F(s)=\int_0^s f(\sigma)\,\dd\sigma,
\end{equation}
and
\[
 \mathcal E(t)=\frac12\|u_t(t)\|_2^2+
                     \frac12\|\nabla u(t)\|_2^2.
\]
If \(r\leq p-1\), the solution is global.
\end{theorem}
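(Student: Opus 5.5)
The theorem is stated as a consequence of \cite[Theorems~1.2--1.3]{RW2012}, so my plan is to reduce each assertion to the hypotheses of those results rather than rebuild the monotone-operator construction.

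\emph{Existence and uniqueness.} I will first check that Assumption~\ref{ass:local} implies the hypotheses of \cite[Assumption~1.1]{RW2012}. The regularity and growth bounds on \(f\), \(f'\), \(f''\) are identical. The source restriction in \cite{RW2012} (of the form \(r<8-6/p\) when \(2<p<3\), and a finite bound when \(p\ge3\)) holds automatically since \(r\le5\), as noted after the assumption. Local existence of a weak solution in the sense of Definition~\ref{def:weak} then follows from \cite[Theorem~1.2]{RW2012}. Uniqueness follows from the corresponding uniqueness statement there, which uses the \(C^2\) hypothesis when \(3<r\le5\). The regularity class in Definition~\ref{def:weak} matches theirs, so nothing further is needed.

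\emph{Energy identity.} I will take \eqref{eq3} on \([0,t]\) from \cite{RW2012}. To pass to general \(t_0\), I subtract the identities on \([0,t]\) and \([0,t_0]\). If I had to justify it directly, I would test \eqref{eq2} with \(\varphi=u_t\), regularized by Steklov averages in time, since \(u_t\) need not lie in \(H^1(0,T;L^2)\). Then:
\begin{itemize}
\item the \(u_{tt}\) and elastic terms produce \(\mathcal E\);
\item the damping term produces \(\int\|\nabla u_t\|_p^p\);
\item the source term yields \(\frac{\mathrm d}{\mathrm dt}\int_\Omega F(u)\), using \(u\in C([0,T];H_0^1)\subset C([0,T];L^6)\) and \(|f(u)u_t|\le C(1+|u|^r)|u_t|\), with \(u_t\in L^p(W^{1,p})\subset L^p(L^\infty)\) for \(p>3\), or the Sobolev embedding otherwise.
\end{itemize}
The main obstacle is this last justification in the critical case \(r=5\). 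There, \(f(u)\in L^{6/5}\) only, so \(f(u)u_t\) must be controlled via \(u_t\in W^{1,p}\subset L^6\). This is exactly where I would defer to \cite{RW2012}.

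\emph{Global existence for \(r\le p-1\).} I would rely on the global statement in \cite{RW2012}, which requires the damping to dominate the source growth. A sketch of the underlying argument runs as follows. Let \(\mathcal E_1=\mathcal E+\|u\|_{r+1}^{r+1}\), and estimate
\[
\frac{\mathrm d}{\mathrm dt}\int_\Omega F(u)\le C\int_\Omega(1+|u|^r)|u_t|.
\]
Bound \(|u_t|\) by \(\|\nabla u_t\|_p\) through the embedding \(W_0^{1,p}\subset L^\infty\) (for \(p>3\)), or \(L^{p^*}\) otherwise. Young's inequality with exponents \(p\) and \(p'\) then absorbs the dissipation, leaving \(C(1+\|u\|^{rp'}_{\cdot})\). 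The condition \(rp'\le r+1\), equivalently \(r\le p-1\), makes this at most linear in \(\mathcal E_1\), so Gronwall gives a bound on finite intervals. Combined with the continuation alternative (derived in the appendix from the local construction and uniqueness), this yields a global solution.
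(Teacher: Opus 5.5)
Your proposal is correct and follows the paper's route: the paper gives no independent proof and takes the theorem directly from \cite[Theorems~1.2--1.3]{RW2012}. Its only checks are that the source restriction in \cite[Assumption~1.1]{RW2012} is automatic for $r\le5$ ($r\le5<8-6/p$ when $2<p<3$, and $r\le5<6$ when $p\ge3$), and that the $C^2$ hypothesis for $3<r\le5$ serves the cited uniqueness theory. Your extra sketches of the energy identity and of the Gronwall argument for $r\le p-1$ are sound but not reproduced in the paper; note only that $r=5$ is not a real obstacle, since $u\in W^{1,p}(0,T;H_0^1(\Omega))$ and $w\mapsto\int_\Omega F(w)\,\mathrm{d}x$ is $C^1$ on $H_0^1(\Omega)\hookrightarrow L^6(\Omega)$, so the genuinely delicate term is $\langle u_{tt},u_t\rangle$, which is handled by Proposition~\ref{prop:product-rule} or by your Steklov averaging.
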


Whenever the initial energy is finite,
\eqref{eq3} and
\(\|\nabla u_t\|_p^p\in L^1(0,T)\) imply that
\(E\) is absolutely continuous on \([0,T]\), with
\[
 E'(t)=-\|\nabla u_t(t)\|_p^p, \quad\text{a.e. }t\in(0,T).
\]

The continuation result needed below concerns \(r\leq5\),
where both finite energy and uniqueness hold for all data in
\(H_0^1(\Omega)\times L^2(\Omega)\).

\begin{corollary}[Continuation alternative]\label{cor:continuation}
Under Assumption~\ref{ass:local}.
The local solution in Theorem~\ref{thm:local} admits a unique
maximal continuation \(u\) on \([0,\Tmax)\), where
\[
 \Tmax=\sup\{T>0:\text{a weak solution with data }(u_0,u_1)
                         \text{ exists on }[0,T]\}
 \in(0,\infty].
\]
It satisfies the energy identity on every compact subinterval
of \([0,\Tmax)\). Moreover,
\begin{equation}\label{eq5}
 \Tmax<\infty\quad\Longrightarrow\quad
 \limsup_{t\uparrow\Tmax}\mathcal E(t)
 =\limsup_{t\uparrow\Tmax}
 \frac12\left(\|u_t(t)\|_2^2+\|\nabla u(t)\|_2^2\right)
 =+\infty.
\end{equation}
\end{corollary}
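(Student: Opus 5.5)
To prove Corollary~\ref{cor:continuation}, I would combine local existence with uniqueness (Theorem~\ref{thm:local}) with the observation that, by Assumption~\ref{ass:local}, the local existence time depends only on an upper bound for the energy norm of the data.

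\textbf{Step 1: maximal solution.} Take two weak solutions on $[0,T_1]$ and $[0,T_2]$ with the same data. By uniqueness they agree on $[0,\min\{T_1,T_2\}]$. Gluing them gives a well-defined $u$ on $[0,\Tmax)$. Its restriction to each $[0,T]$ with $T<\Tmax$ is a weak solution, so \eqref{eq3} holds on every compact subinterval.

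For gluing I also need \emph{concatenation}. Suppose $u$ solves on $[0,T]$ and $w$ solves on $[0,\delta]$ with data $(u(T),u_t(T))$. Then $u$ extended by $w(\cdot-T)$ is a weak solution on $[0,T+\delta]$. To check \eqref{eq2}, I split $\int_0^t$ at $T$. The boundary terms $\int_\Omega u_t\varphi\,dx$ at $s=T$ cancel because $u_t\in C([0,T],L^2)$ matches $w_t(0)$. The regularity classes $C([0,T+\delta],H_0^1)$, $C([0,T+\delta],L^2)$ and $L^p(0,T+\delta;W_0^{1,p})$ are preserved under concatenation because the values agree at the junction.

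\textbf{Step 2: uniform local existence time.} This is the step I expect to be the main obstacle. I need the following quantitative form of \cite[Theorem~1.2]{RW2012}: for every $M>0$ there is $T_M>0$ such that all data with $\|u_1\|_2^2+\|\nabla u_0\|_2^2\le M$ generate a solution on $[0,T_M]$. I would extract this from the construction in \cite{RW2012}. There the existence time comes from a fixed-point or a priori energy bound on truncated sources, where $f$ is locally Lipschitz from $H_0^1$ into $(W_0^{1,p})'$, which holds since $r\le5$. The resulting time depends only on the size of the data in $H_0^1\times L^2$ and on $b_0,b_1,b_2,p,\Omega$. The appendix would have to make this dependence explicit.

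\textbf{Step 3: alternative.} Suppose $\Tmax<\infty$ but $\limsup_{t\uparrow\Tmax}\mathcal E(t)<\infty$. Then $\mathcal E(t)\le M/2$ on $[0,\Tmax)$ for some $M$, because $\mathcal E$ is continuous on each compact subinterval and bounded near $\Tmax$. Pick $t_0\in(\Tmax-T_M/2,\Tmax)$. Solve from the data $(u(t_0),u_t(t_0))$ on $[0,T_M]$ using Step 2. Concatenating as in Step 1 gives a weak solution on $[0,t_0+T_M]$, and $t_0+T_M>\Tmax$. This contradicts the definition of $\Tmax$, which yields \eqref{eq5}. The energy identity on $[0,\Tmax)$ then follows as in Step 1.
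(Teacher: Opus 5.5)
Your proposal is correct and follows the same route as the paper. Uniqueness yields the maximal solution, and a uniform local existence time on bounded sets of \(H_0^1(\Omega)\times L^2(\Omega)\) data is extracted from the construction in \cite{RW2012}; this is the paper's Lemma~\ref{lem:uniform-restart}. Restarting at some \(t_0\) with \(\Tmax-t_0<\tau_M\) and concatenating then contradicts the definition of \(\Tmax\). The only difference is that the paper also uses uniqueness to identify the restarted solution with \(u\) on \([t_0,\Tmax)\); your Step~3 rightly omits this, since \(\Tmax\) is a supremum over existence alone and the contradiction does not need it.
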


The proof, based on a uniform local existence time on bounded
sets of initial data and uniqueness, is given in the appendix.

From this point onward, we take \(f(s)=|s|^{r-1}s\) and assume
\begin{equation}\label{eq6}
 2<p<r+1\leq6,\quad
 (u_0,u_1)\in H_0^1(\Omega)\times L^2(\Omega).
\end{equation}
The power source satisfies Assumption~\ref{ass:local}:
it is \(C^1\) because \(r>1\), and it has the required \(C^2\)
regularity when \(r>3\). Thus Theorem~\ref{thm:local} and
Corollary~\ref{cor:continuation} apply to every state in
\eqref{eq6}.

The total energy in \eqref{eq4} becomes
\[
E(t)=\frac12\|u_t(t)\|_2^2+\frac12\|\nabla u(t)\|_2^2
             -\frac1{r+1}\|u(t)\|_{r+1}^{r+1}, \quad \forall t\in[0,\Tmax).
\]
The inclusion \(W_0^{1,p}(\Omega)\hookrightarrow H_0^1(\Omega)\)
follows from
\[
 \|\nabla w\|_2
 \leq|\Omega|^{1/2-1/p}\|\nabla w\|_p.
\]
Together with \(H_0^1(\Omega)\hookrightarrow L^{r+1}(\Omega)\),
this gives constants \(C_P,C_E>0\), depending only on
\(p,r,\Omega\), such that
\begin{equation}\label{eq7}
 \|w\|_2\leq C_P\|\nabla w\|_p,\quad
 \|w\|_{r+1}\leq C_E\|\nabla w\|_p,
 \quad w\in W_0^{1,p}(\Omega).
\end{equation}
These constants remain fixed throughout the proof.

\subsection{Potential well}

We use the potential well notation of \cite[Section~3]{RTW2012}.
For \(w\in H_0^1(\Omega)\), define
\begin{equation*}
J(w)=\frac12\|\nabla w\|_2^2
       -\frac1{r+1}\|w\|_{r+1}^{r+1}.
\end{equation*}
Then \(E(t)=\frac12\|u_t(t)\|_2^2+J(u(t))\) for all $t\in[0,\Tmax)$, and
\[
 \langle J'(w),w\rangle
 =\|\nabla w\|_2^2-\|w\|_{r+1}^{r+1}.
\]
The Nehari manifold is
\begin{equation*}
\begin{split}
 \mathcal N
 &=\{w\in H_0^1(\Omega)\setminus\{0\}:
                    \langle J'(w),w\rangle=0\}\\
 &=\{w\in H_0^1(\Omega)\setminus\{0\}:
                    \|\nabla w\|_2^2=\|w\|_{r+1}^{r+1}\}.
 \end{split}
\end{equation*}
The depth is
\begin{equation}\label{eq8}
 d=\inf_{w\in H_0^1(\Omega)\setminus\{0\}}
           \sup_{\tau\geq0}J(\tau w)
   =\inf_{w\in\mathcal N}J(w)>0.
\end{equation}
Following \cite{RTW2012}, set
\begin{equation*}
\begin{split}
 \mathscr W
 &=\{w\in H_0^1(\Omega):J(w)<d\},\\
 \mathscr W_1
 &=\{w\in\mathscr W:
       \|\nabla w\|_2^2>\|w\|_{r+1}^{r+1}\}\cup\{0\},\\
 \mathscr W_2
 &=\{w\in\mathscr W:
       \|\nabla w\|_2^2<\|w\|_{r+1}^{r+1}\}.
 \end{split}
\end{equation*}
Thus \(\mathscr W\) is the potential well, \(\mathscr W_1\)
is its stable part, and \(\mathscr W_2\) is its unstable part. 

Also following \cite[Section~3.1]{RTW2012}, define
\begin{equation}\label{eq9}
 R=\sup_{w\in H_0^1(\Omega)\setminus\{0\}}
       \frac{\|w\|_{r+1}^{r+1}}{\|\nabla w\|_2^{r+1}}
 \in(0,\infty).
\end{equation}
In particular,
\begin{equation}\label{eq10}
 \|w\|_{r+1}\leq R^{1/(r+1)}\|\nabla w\|_2.
\end{equation}
The associated scalar function and its maximizer are
\begin{equation}\label{eq11}
 \mathcal G(s)=\frac12s^2-\frac{R}{r+1}s^{r+1},
 \quad s\geq0,\quad s_0=R^{-1/(r-1)}.
\end{equation}
Then \(J(w)\geq\mathcal G(\|\nabla w\|_2)\).
For the pure-power potential considered here, the optimal constant
\(R\) defined in \eqref{eq9} determines the potential-well
depth exactly:
\begin{equation}\label{eq12}
 d=\widetilde d=\mathcal G(s_0)
   =\frac{r-1}{2(r+1)}R^{-2/(r-1)}
   =\frac{r-1}{2(r+1)}s_0^2>0.
\end{equation}
Here we retain the notation \(\widetilde d=\mathcal G(s_0)\)
for consistency with \cite[Section~3.1]{RTW2012}.
The identity is proved in Section~\ref{sec3.2} directly from
the definition of \(R\), without assuming that the supremum
in \eqref{eq9} is attained.

\subsection{Main results}

We specify the parameters entering the lifespan bounds for the
first two theorems. Given \(\nu>p\), \(h_0>0\), and
\(v_0\in C_c^\infty(\Omega)\), choose
\begin{equation}\label{eq13}
 p<\lambda<\nu,\quad (3-p)\lambda<2,\quad
 \mu=\frac{\lambda+\nu}{2},\quad
 \beta=\frac{\lambda-p}{p-1},\quad c_0=1+\frac\nu2.
\end{equation}
Such a \(\lambda\) exists whenever \(\nu>p>2\). Indeed, if \(p\geq3\), the second
condition is automatic. If \(2<p<3\), then
\[
 \frac2{3-p}-p=\frac{(p-1)(p-2)}{3-p}>0,
\]
so one can choose
\[
 p<\lambda<\min\left\{\nu,\frac2{3-p}\right\}.
\]

Set
\begin{equation}\label{eq14}
 m=\inf_{z\geq0}
 \frac{(z-1)(z^{p-1}-1)+\mu/\lambda-1}{1+z^2}>0,\quad
 \widehat{C}=\frac{C_P^2}{c_0}(\beta^2+\mu^2),\quad
 \kappa=\left(\frac{\lambda h_0}{4}\right)^{1/(p-1)}.
\end{equation}
The positivity of \(m\) is proved in
Lemma~\ref{lem:functional}. The parameters $\mathcal R_0$ and $a$ can be chosen
explicitly as follows:
\begin{equation}\label{eq15}
 \begin{split}
 \mathcal{R}_0=\max\biggl\{1,\|\nabla v_0\|_p,
 \left(\frac{4\kappa|(v_0,u_1)|}{h_0}\right)^{(p-1)/p},\,
 \left(\frac{2\widehat{C}}{m}\right)^{(p-1)/2}
                    \kappa^{(3-p)(p-1)/2}\biggr\},\quad
 a=\kappa \mathcal{R}_0^{-\lambda/(p-1)}.
 \end{split}
\end{equation}
All these parameters are fixed before the solution is estimated.
The quantities \(\nu,h_0,v_0\) and the upper-bound constant
\(M\) are specified separately in each theorem. 
For Theorems~\ref{thm:negative} and \ref{thm:well} below,  first choose
\(\varepsilon,\delta\), hence \(\nu\), then fix \(v_0\) and \(h_0\) as
specified in the theorem. Only afterwards choose \(\lambda,\mu,\beta\),
the constants \(m,\widehat C,\kappa\), and finally \(\mathcal R_0,a\).
Increasing \(\mathcal R_0\) does not alter the fixed approximation error.
All prescriptions remain valid if \(m\) is replaced consistently by the
explicit smaller constant
\[
 m_*=\frac15\min\left\{\frac\mu\lambda-1,\ 2^{p-1}-1\right\}>0, 
\]
see the proof of Lemma~\ref{lem:functional}. This avoids evaluating the
one-dimensional infimum in \eqref{eq14}.

\begin{theorem}[Negative initial energy]\label{thm:negative}
Assume \eqref{eq6}. If \(E(0)<0\), then the maximal weak
solution of \eqref{eq1} has finite existence time and
satisfies \eqref{eq5}. 

More precisely, choose \(\varepsilon\in(0,1)\) and
\(\delta>0\) such that
\[
 \nu=(1-\varepsilon)(r+1)>p,\quad
 \frac\nu2-1-\delta>0.
\]
Choose \(v_0\in C_c^\infty(\Omega)\), set
\(\eta=u_0-v_0\), and require
\[
 C_\eta:=\frac{\|\nabla\eta\|_2^2}{4\delta}
 +\frac{r^r\|\eta\|_{r+1}^{r+1}}
 {(r+1)^{r+1}\varepsilon^r}
 <\frac\nu2(-E(0)).
\]
Define
\[
 h_0=-E(0)-\frac{C_\eta}{\nu}>0,\quad
 M=\frac{2}{r+1}(C_E+\|\eta\|_{r+1})^{r+1},
\]
and choose the remaining parameters by
\eqref{eq13}--\eqref{eq15}.
Then the maximal existence time satisfies
\begin{equation}\label{eq16}
 \Tmax\leq\frac{r+1}{a\mu\beta}
 \left(\frac{2(M+aC_P\sqrt M)}{h_0}\right)^{\beta/(r+1)}.
\end{equation}
\end{theorem}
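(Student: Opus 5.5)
We argue by contradiction: assume \(\Tmax\) exceeds the right-hand side of \eqref{eq16}. We work on \([0,T]\) for any \(T<\Tmax\) with the shifted displacement \(v(t)=u(t)-\eta\), where \(\eta=u_0-v_0\), so \(v(t)=u(t)-u_0+v_0\). Since \(u(t)-u_0=\int_0^t u_s\,\dd s\in W_0^{1,p}(\Omega)\) and \(v_0\in C_c^\infty(\Omega)\), the function \(v\) is an admissible test function in \eqref{eq2}. The appendix product rule then gives, for a.e. \(t\),
\[
 \frac{\mathrm d}{\mathrm dt}(v,u_t)=\|u_t\|_2^2-(\nabla u,\nabla v)-\langle|\nabla u_t|^{p-2}\nabla u_t,\nabla v\rangle+(|u|^{r-1}u,v).
\]

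We set \(H(t)=h_0-E(t)-C_\eta/\nu\). By \eqref{eq3}, \(H'=\|\nabla u_t\|_p^p\geq0\) and \(H(0)=h_0>0\).

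The first key step is to bound the nondamping part from below by \(c_0\|u_t\|_2^2+\|u\|_{r+1}^{r+1}/M'\)-type quantities plus \(\nu H\). Write \(\nabla u\cdot\nabla v=|\nabla u|^2-\nabla u\cdot\nabla\eta\) and \((|u|^{r-1}u,v)=\|u\|_{r+1}^{r+1}-(|u|^{r-1}u,\eta)\). Young's inequality with \(\delta\) absorbs \((\nabla u,\nabla\eta)\) into \(\delta\|\nabla u\|_2^2+\|\nabla\eta\|_2^2/(4\delta)\). Young's inequality with exponents \((r+1)/r\) and \(r+1\) absorbs the source error into \(\varepsilon\|u\|_{r+1}^{r+1}\) plus the \(\eta\) term in \(C_\eta\). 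Next we add and subtract \(\nu E\), with \(\nu=(1-\varepsilon)(r+1)\), so that the \(L^{r+1}\) terms balance. The kinetic coefficient is then \(1+\nu/2=c_0\), and the gradient coefficient \(\nu/2-1-\delta\) is positive. This yields
\[
\frac{\mathrm d}{\mathrm dt}(v,u_t)\geq c_0\|u_t\|_2^2+\nu H-\langle|\nabla u_t|^{p-2}\nabla u_t,\nabla v\rangle,
\]
together with a term controlling \(\|u\|_{r+1}^{r+1}\), which comes from \(H\leq\|u\|_{r+1}^{r+1}/(r+1)\). We take \(\nu\) rather than \(\lambda\) here so that the margin \(\mu/\lambda-1\) is available later.

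The second step treats the damping term via \(\mathcal R(t)=\mathcal R_0+\int_0^t\|\nabla u_s\|_p\,\dd s\). This satisfies \(\|\nabla v(t)\|_p\leq\mathcal R(t)\) and \(\mathcal R'=\|\nabla u_t\|_p=(H')^{1/p}\). Hence the damping term is bounded by \((H')^{(p-1)/p}\mathcal R\).

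We then form the weighted functional
\[
L(t)=H(t)^{1-\alpha}+a\,\mathcal R(t)^{-\beta}(v,u_t)-(\text{correction in }\mathcal R),
\]
or equivalently work with \(\Phi=H+a\mathcal R^{-\beta}(v,u_t)\) with \(\beta=(\lambda-p)/(p-1)\). Differentiating produces the mixed terms \(a\mathcal R^{-\beta}(H')^{(p-1)/p}\mathcal R\) and \(a\beta\mathcal R^{-\beta-1}\mathcal R'(v,u_t)\). The first is handled by Young's inequality, split against \(H'\) and against a power of \(\mathcal R\); normalizing \(z=\mathcal R'/(\kappa\mathcal R^{\lambda/(p-1)})\)-type variables produces exactly the infimum \(m\) in \eqref{eq14}. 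The second is handled by \(|(v,u_t)|\leq C_P\mathcal R\|u_t\|_2\) and absorbed by \(c_0\|u_t\|_2^2\), which yields the constant \(\widehat C\).

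The choice \(a=\kappa\mathcal R_0^{-\lambda/(p-1)}\) and the lower bounds on \(\mathcal R_0\) in \eqref{eq15} make the leftover terms nonnegative. The conditions \((3-p)\lambda<2\) and \(\mathcal R\geq\mathcal R_0\geq1\) ensure the powers of \(\mathcal R\) go the right way in both ranges \(2<p\leq3\) and \(p>3\). The third bound on \(\mathcal R_0\) gives initial positivity, \(\Phi(0)\geq h_0/2\). This absorption step is the main obstacle. I would first try to verify it by treating \(z=\mathcal R'/\mathcal R^{\lambda/(p-1)}\) scaled by \(\kappa\), so that every mixed term becomes a function of \(z\) of the form appearing in \(m\).

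Finally we close with a superlinear inequality. The absorption yields \(\Phi'\geq a\mu\mathcal R^{-\beta}(\|u_t\|_2^2+\|u\|_{r+1}^{r+1}/(r+1))\)-type bounds. We also have
\[
|(v,u_t)|\leq\|v\|_2\|u_t\|_2\leq C(\|u\|_{r+1}+\|\eta\|_{r+1})\|u_t\|_2.
\]
Combined with \(H\lesssim\|u\|_{r+1}^{r+1}\), this gives \(\Phi\leq(M+aC_P\sqrt M)\,(\ldots)\), and hence a Riccati-type inequality \(\Phi'\geq c\,\Phi^{1+\beta/(r+1)}\) in a suitable renormalized form. The \(\mathcal R^{-\beta}\) weight must be traded against \(\Phi\) here, which again uses \(\mathcal R\geq1\) and the growth relation. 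Integrating from \(\Phi(0)\geq h_0/2\) gives a blow-up time no larger than the bound in \eqref{eq16}. Since \(H\) and \(\Phi\) are controlled by \(\mathcal E\), the bound contradicts finiteness on \([0,T]\) unless \(\mathcal E\) is unbounded. Corollary~\ref{cor:continuation} then yields \(\Tmax<\infty\) and \eqref{eq5}.
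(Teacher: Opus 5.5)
\textbf{Setup: correct.} Your first two steps match the paper: the shifted displacement $v=u-\eta$, the Young estimates with $\delta,\varepsilon$ and $\nu=(1-\varepsilon)(r+1)$ giving $c_0=1+\nu/2$, and the damping bound $\mathcal R(\mathcal R')^{p-1}$. One small slip: with $H=h_0-E-C_\eta/\nu$ you get $H(0)=2h_0$, and $Y'\ge\nu H+\cdots$ is off by $\nu h_0$. You want $H=-E-C_\eta/\nu$.

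\textbf{Gap 1: the sign of the weight.} You weight the pairing by $\mathcal R^{-\beta}$ with $\beta=(\lambda-p)/(p-1)>0$. The paper uses a positive power,
$G=h+a\mathcal R^{\beta}Y-\frac{a^{p-1}}{\lambda}\mathcal R^{\lambda}$,
and this sign is what makes the argument close. After absorption one only has the linear inequality $G'\ge a\mu\mathcal R^{\beta}G$. It becomes superlinear because every piece of $G$ is bounded by a power of $\mathcal R$:
\begin{itemize}
\item $\|u(t)\|_{r+1}\le(C_E+\|\eta\|_{r+1})\mathcal R(t)$ gives $h\le M\mathcal R^{r+1}$ and $\|u_t\|_2^2\le M\mathcal R^{r+1}$;
\item $|Y|\le C_P\mathcal R\|u_t\|_2$.
\end{itemize}
Hence $G\le C_*\mathcal R^{r+1}$, i.e.\ $\mathcal R^\beta\ge(G/C_*)^{\beta/(r+1)}$. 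This is a \emph{lower} bound on $\mathcal R$, which is useless for a factor $\mathcal R^{-\beta}$. Since $\mathcal R$ can grow through the dissipation, $\Phi'\ge a\mu\mathcal R^{-\beta}\Phi$ yields only $\Phi(t)\ge\Phi(0)\exp\bigl(a\mu\int_0^t\mathcal R^{-\beta}\bigr)$, which forces no finite-time blow-up. Your bound of $\Phi$ by $\|u\|_{r+1}$ and $\|u_t\|_2$ does not help either: it gives only a linear comparison and is not expressed in the weight variable. The exponent bookkeeping is also tied to the positive power: $a\mathcal R_0^{\beta}=\kappa\mathcal R_0^{-p/(p-1)}$ (initial positivity via \eqref{eq15}), $p(\beta+1)=\beta+\lambda$, and $2\beta+2-\lambda=((3-p)\lambda-2)/(p-1)<0$ (where $(3-p)\lambda<2$ enters \eqref{eq37}). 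With $\mathcal R^{-\beta}$ these identities change, so your remark about $(3-p)\lambda<2$ does not transfer.

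\textbf{Gap 2: the correction term is indispensable.} The term $-\frac{a^{p-1}}{\lambda}\mathcal R^\lambda$ cannot be dropped, so your $\Phi=H+a\mathcal R^{\mp\beta}Y$ is not equivalent to the corrected functional. The paper writes
$a\nu\mathcal R^\beta h=a\mu\mathcal R^\beta G+a(\nu-\mu)\mathcal R^\beta h-a^2\mu\mathcal R^{2\beta}Y+\frac{\mu}{\lambda}a^p\mathcal R^{\beta+\lambda}$.
The last term, the correction's derivative $-a^{p-1}\mathcal R^{\lambda-1}\mathcal R'$, $h'=(\mathcal R')^p$, and the damping term combine to $a^p\mathcal R^{\beta+\lambda}\bigl(z^p-z^{p-1}-z+\frac{\mu}{\lambda}\bigr)$ with $z=\mathcal R'/(a\mathcal R^{\beta+1})$. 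This is at least $(\frac{\mu}{\lambda}-1)a^p\mathcal R^{\beta+\lambda}$ because $(z-1)(z^{p-1}-1)\ge0$. That positivity is what absorbs the mixed terms $\widehat C a^3\mathcal R^{3\beta+2}(1+z^2)$ and produces $m$.

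Without the correction you are left with $z^p-z^{p-1}$, which is negative on $(0,1)$. The term $a(\nu-\mu)\mathcal R^\beta h$ cannot compensate, since $h$ is only bounded below by $h_0$ while the deficit scales like $a^p\mathcal R^{\beta+\lambda}$. For the same reason, a Young splitting of the damping term against $H'$ and a power of $\mathcal R$ loses too much: the margin $\frac{\mu}{\lambda}-1$ is small, and only the exact factorization keeps the combination positive.

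\textbf{Conclusion.} The absorption step you flagged as the main obstacle does not go through as proposed, and the closing Riccati step fails with the negative weight.
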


\begin{theorem}[Blow-up in \(\mathscr W_2\) below the well depth]\label{thm:well}
Assume \eqref{eq6}. If
\begin{equation*}
E(0)<d,\quad u_0\in\mathscr W_2,
\end{equation*}
then \(\Tmax<\infty\) and \eqref{eq5} holds.

For a quantitative bound, let \(\tilde{s}>s_0\)
be the unique solution on the decreasing branch of \(\mathcal G\)
of
\[
 \mathcal G(\tilde{s})=\frac12\tilde{s}^2-\frac{R}{r+1}\tilde{s}^{r+1}=E(0).
\]
Choose \(\varepsilon\in(0,1)\) and \(\delta>0\) so that
\begin{equation}
\label{eq17}
 \nu=(1-\varepsilon)(r+1)>p,\quad
 \left(\frac\nu2-1-\delta\right)\tilde{s}^2-\nu d>0.
\end{equation}
Choose \(v_0\in C_c^\infty(\Omega)\) so that, with
\(\eta=u_0-v_0\) and \(C_\eta\) defined as in
Theorem~\ref{thm:negative},
\[
 C_\eta<\left(\frac\nu2-1-\delta\right)\tilde{s}^2-\nu d.
\]
Set
\[
 h_0=d-E(0),\quad
 M=d+2\max\{E(0),0\}
       +\frac{2}{r+1}(C_E+\|\eta\|_{r+1})^{r+1},
\]
and use \eqref{eq13}--\eqref{eq15}.
Then
\begin{equation}\label{eq18}
 \Tmax\leq\frac{r+1}{a\mu\beta}
 \left(\frac{2(M+aC_P\sqrt M)}{d-E(0)}\right)^{\beta/(r+1)}.
\end{equation}
\end{theorem}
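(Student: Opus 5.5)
We prove Theorem~\ref{thm:well} by reducing it to the same auxiliary-functional machinery as Theorem~\ref{thm:negative}. The one new ingredient is a lower bound on the gradient along the flow. The vacuum-gap argument gives $\|\nabla u(t)\|_2\ge\tilde s$ for all $t\in[0,\Tmax)$. Indeed, by \eqref{eq3} the energy is nonincreasing, so $\mathcal G(\|\nabla u(t)\|_2)\le J(u(t))\le E(t)\le E(0)<d=\mathcal G(s_0)$. Hence $\|\nabla u(t)\|_2\neq s_0$ for every $t$, and by continuity of $t\mapsto\|\nabla u(t)\|_2$ (since $u\in C([0,\Tmax),H_0^1)$) it stays on one side of $s_0$.

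At $t=0$, the condition $u_0\in\mathscr W_2$ together with \eqref{eq10} gives $\|\nabla u_0\|_2^2<\|u_0\|_{r+1}^{r+1}\le R\|\nabla u_0\|_2^{r+1}$, so $\|\nabla u_0\|_2>s_0$. Thus $\|\nabla u(t)\|_2>s_0$ throughout. Since $\mathcal G$ is decreasing on $(s_0,\infty)$ and $\mathcal G(\|\nabla u(t)\|_2)\le E(0)=\mathcal G(\tilde s)$, we get $\|\nabla u(t)\|_2\ge\tilde s$. The existence and uniqueness of $\tilde s$ uses $E(0)<d$, and $\mathcal G\to-\infty$ on the decreasing branch (if $E(0)<0$ this still works). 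We also need the identity \eqref{eq12} $d=\mathcal G(s_0)$, which is asserted as proved in Section~\ref{sec3.2}.

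Next we set up the same scheme as for negative energy. Put $v=u-u_0+v_0$ with $\eta=u_0-v_0$, and let $H(t)=h_0+(E(0)-E(t))$ with $h_0=d-E(0)$. We use the shifted-multiplier identity
\[
\frac{d}{dt}(v,u_t)=\|u_t\|_2^2-(\nabla u,\nabla v)-\langle|\nabla u_t|^{p-2}\nabla u_t,\nabla v\rangle+(f(u),v),
\]
justified by the product rule of the appendix, since $u-u_0\in W^{1,p}_0$ in time-integrated form. Replacing $v=u-\eta$, Young's inequality with $\delta$ handles $(\nabla u,\nabla\eta)$, and Young's inequality with $\varepsilon$ handles $(|u|^{r-1}u,\eta)$. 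These give exactly the error $C_\eta$ and yield
\[
\frac{d}{dt}(v,u_t)\ge\Bigl(1+\frac\nu2\Bigr)\|u_t\|_2^2+\Bigl(\frac\nu2-1-\delta\Bigr)\|\nabla u\|_2^2-\nu E(t)-C_\eta-\text{(damping)},
\]
using $(1-\varepsilon)\|u\|_{r+1}^{r+1}=\frac{\nu}{r+1}\|u\|_{r+1}^{r+1}$. Now write $-\nu E(t)=\nu H(t)-\nu d$ and insert $\|\nabla u\|_2\ge\tilde s$. By \eqref{eq17} and the choice of $v_0$, the constant part $(\frac\nu2-1-\delta)\tilde s^2-\nu d-C_\eta$ is positive. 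This gives $\frac{d}{dt}(v,u_t)\ge c_0\|u_t\|_2^2+\nu H(t)-\text{(damping)}$, which has precisely the form of the estimate in the negative-energy case with $h_0$ replaced by $d-E(0)$. Also $H'(t)=\|\nabla u_t\|_p^p=(\mathcal R')^p$ and $H\ge h_0$.

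The remaining step is to feed this into the weighted auxiliary functional lemma (Lemma~\ref{lem:functional} and its consequences from Section~\ref{sec3}), with the parameters \eqref{eq13}--\eqref{eq15}. The damping term is bounded by $(\mathcal R')^{p-1}\mathcal R$ and absorbed via the choice $a=\kappa\mathcal R_0^{-\lambda/(p-1)}$. The choice of $\mathcal R_0$, which includes the $|(v_0,u_1)|$ term, guarantees initial positivity, and then the superlinear inequality gives \eqref{eq18}.

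The upper-bound constant $M$ enters through bounding $\|u_t\|_2^2$ and $\|u\|_{r+1}^{r+1}$ by $H$. Since $E(t)\le E(0)$, we have $\frac12\|u_t\|_2^2+\frac12\|\nabla u\|_2^2\le E(0)+\frac1{r+1}\|u\|_{r+1}^{r+1}$, and $\|v\|_{r+1}$ is controlled through $\mathcal R$ and $\eta$. This is why $M$ contains $d+2\max\{E(0),0\}$ plus the $\eta$ term: writing $E(0)=d-h_0\le d$ with $H\ge h_0$, one gets $M\lesssim$ multiples of $H$-powers.

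The main obstacle is this last bookkeeping. We must check that the constant $M$ stated here is exactly what the upper bound of the functional in terms of $H^{1/(r+1)}$-type powers requires when $E(0)\ge0$, because then $-E(t)$ is no longer bounded by $H$ alone and the additive $d+2E(0)$ must be absorbed using $H\ge h_0$. We would first try to mimic the negative-energy bound verbatim and track the extra additive constant.
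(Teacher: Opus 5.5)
Your route is the paper's: your $H$ is the paper's $h(t)=d-E(t)$. The shifted multiplier, the error constant $C_\eta$, the use of $\|\nabla u(t)\|_2\ge\tilde s$ to absorb $\nu d+C_\eta$, and the final appeal to Lemmas~\ref{lem:functional} and~\ref{lem:positive-h} all match. Your derivation of the gradient gap differs slightly from the paper's and is leaner. You note that $\mathcal G(\|\nabla u(t)\|_2)\le E(0)<d=\max\mathcal G$ rules out $\|\nabla u(t)\|_2=s_0$, then conclude by continuity. The paper instead first proves invariance of $\mathscr W_2$ by a first-hitting argument on the Nehari manifold. Both are valid.

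\textbf{Unclosed step.} The proof is not finished at the step you call the main obstacle, and your plan for it points the wrong way. Lemma~\ref{lem:positive-h} needs no bound in powers of $H$. The inequality $H\ge h_0$ enters only through \eqref{eq48}, not the upper bound. What must be checked is \eqref{eq49}:
\[
h(t)\le M\mathcal R^{r+1}(t),\qquad \|u_t(t)\|_2^2\le M\mathcal R^{r+1}(t).
\]
The reason is that the superlinear inequality comes from $G\le C_*\mathcal R^{r+1}$, hence $\mathcal R^\beta\ge (G/C_*)^{\beta/(r+1)}$. This is a two-line check:
\[
h(t)=d-E(t)\le d+\tfrac1{r+1}\|u(t)\|_{r+1}^{r+1},\qquad
\|u_t(t)\|_2^2\le 2\max\{E(0),0\}+\tfrac2{r+1}\|u(t)\|_{r+1}^{r+1}.
\]
By \eqref{eq64}, $\|u(t)\|_{r+1}\le (C_E+\|\eta\|_{r+1})\mathcal R(t)$. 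The additive constants $d$ and $2\max\{E(0),0\}$ are absorbed simply because $\mathcal R(t)\ge\mathcal R_0\ge1$. This gives exactly the stated $M$. Your worry that $-E(t)$ is not controlled by $H$ when $E(0)\ge0$ is beside the point, since $-E=H-d\le H$ always. The relevant comparison is with $\mathcal R^{r+1}$, not $H$.

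\textbf{Unverified choice of parameters.} You take \eqref{eq17} as given. The qualitative claim $\Tmax<\infty$, however, requires that admissible $\varepsilon,\delta$ exist. This is where $E(0)<d$ is really used, beyond defining $\tilde s$. By \eqref{eq12}, at $\varepsilon=\delta=0$ the second expression in \eqref{eq17} equals
\[
\tfrac{r-1}{2}\tilde s^2-(r+1)d=\tfrac{r-1}{2}(\tilde s^2-s_0^2)>0,
\]
since $\tilde s>s_0$. Hence small $\varepsilon<1-p/(r+1)$ and small $\delta$ work; the paper gives explicit thresholds. Note also that \eqref{eq17} forces $\frac\nu2-1-\delta>0$. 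You need this before substituting $\|\nabla u\|_2\ge\tilde s$ into the gradient term.
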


\begin{corollary}[Nonpositive initial energy]\label{cor:nonpositive}
Under \eqref{eq6}, every nonzero initial state with
\(E(0)\leq0\) generates a solution satisfying the conclusion
of Theorem~\ref{thm:well}.
\end{corollary}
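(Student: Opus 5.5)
The plan is to check that every nonzero initial state with \(E(0)\leq0\) satisfies the hypotheses of Theorem~\ref{thm:well}, namely \(E(0)<d\) and \(u_0\in\mathscr W_2\). The first condition is immediate: \(d>0\) by \eqref{eq12}, so \(E(0)\leq0<d\). The work is in showing \(u_0\in\mathscr W_2\). That is, we must show \(u_0\neq0\), \(J(u_0)<d\), and \(\|\nabla u_0\|_2^2<\|u_0\|_{r+1}^{r+1}\).

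The first step is to rule out \(u_0=0\). If \(u_0=0\), then \(E(0)=\frac12\|u_1\|_2^2\leq0\) forces \(u_1=0\), contradicting that the initial state \((u_0,u_1)\) is nonzero. So \(u_0\neq0\). Next, \(J(u_0)=E(0)-\frac12\|u_1\|_2^2\leq0<d\), hence \(u_0\in\mathscr W\).

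The main step is the strict Nehari inequality. Since \(J(u_0)\leq0\),
\[
\frac12\|\nabla u_0\|_2^2\leq\frac1{r+1}\|u_0\|_{r+1}^{r+1},
\]
so \(\|\nabla u_0\|_2^2\leq\frac{2}{r+1}\|u_0\|_{r+1}^{r+1}\). Because \(r>1\), we have \(\frac2{r+1}<1\). Moreover \(\|u_0\|_{r+1}>0\) since \(u_0\neq0\). Therefore
\[
\|\nabla u_0\|_2^2\leq\frac{2}{r+1}\|u_0\|_{r+1}^{r+1}<\|u_0\|_{r+1}^{r+1}.
\]
This gives \(u_0\in\mathscr W_2\), and Theorem~\ref{thm:well} applies, including the lifespan bound \eqref{eq18}.

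The only delicate point is the case \(E(0)=0\), which Theorem~\ref{thm:negative} does not cover. Here we must make sure the strict inequality is not lost when \(J(u_0)=0\). The argument above handles this because the factor \(\frac{2}{r+1}\) is strictly below \(1\) and the \(L^{r+1}\) norm of \(u_0\) is positive. We should also confirm that the quantitative choices in \eqref{eq17} are available in this case. Since \(E(0)\leq 0<d\), the root \(\tilde s\) exists on the decreasing branch and satisfies \(\tilde s>s_0\). By \eqref{eq12}, \(\nu d=\nu\frac{r-1}{2(r+1)}s_0^2\). Taking \(\varepsilon\) and \(\delta\) small, the left side of \eqref{eq17} tends to \(\frac{r-1}{2}\tilde s^2-\frac{r-1}{2}s_0^2>0\). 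Hence \eqref{eq17} is satisfiable, and this is already implicit in Theorem~\ref{thm:well}.
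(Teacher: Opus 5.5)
Your proposal is correct and follows essentially the paper's argument: you check \(E(0)\leq0<d\), \(J(u_0)\leq E(0)\leq0<d\), and the strict Nehari inequality via the factor \(\frac{2}{r+1}<1\). The only cosmetic difference is that the paper keeps the velocity term, writing \(\|\nabla u_0\|_2^2-\|u_0\|_{r+1}^{r+1}\leq-\frac{r+1}{2}\|u_1\|_2^2-\frac{r-1}{2}\|\nabla u_0\|_2^2<0\), so strictness follows directly from \((u_0,u_1)\neq0\); you instead first deduce \(u_0\neq0\) and then drop \(\|u_1\|_2^2\).
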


\begin{theorem}[A sufficient blow-up criterion at positive initial energy]\label{thm:positive}
Assume \eqref{eq6} and \(E(0)>0\).
First choose \(\varepsilon\in(0,1)\), \(\delta>0\) so that
\[
 \nu=(1-\varepsilon)(r+1)>p,\quad \frac\nu2-1-\delta>0.
\]
Choose \(\lambda,\mu,\beta,c_0\) by \eqref{eq13},
and let \(m,\widehat C\) have the values in
\eqref{eq14}. Define
\begin{equation}\label{eq19}
 \widetilde C=
 \left(\frac{2\widehat C}{m}\right)^{(p-1)/2}(2\lambda)^{(3-p)/2}.
\end{equation}
Fix \(v_0\in C_c^\infty(\Omega)\), let \(\eta=u_0-v_0\), and define
\(C_\eta\) as in Theorem~\ref{thm:negative}.
Suppose there is \(\mathcal R_0\) such that
\begin{equation}\label{eq20}
 \mathcal R_0\geq
 \max\left\{1,\|\nabla v_0\|_p,\
       \widetilde C\left(E(0)+\frac{C_\eta}{\nu}\right)^{(3-p)/2}\right\}
\end{equation}
and
\begin{equation}\label{eq21}
 (v_0,u_1)>
 \frac{3}{(2\lambda)^{1/(p-1)}}
 \left(E(0)+\frac{C_\eta}{\nu}\right)^{(p-2)/(p-1)}
                 \mathcal R_0^{p/(p-1)}.
\end{equation}
Then \(\Tmax<\infty\) and \eqref{eq5} holds.

In addition, for these fixed choices, put
\begin{equation}\label{eq22}
 \kappa=\left[2\lambda\left(E(0)+\frac{C_\eta}{\nu}\right)\right]^{1/(p-1)},
 \quad a=\kappa\mathcal R_0^{-\lambda/(p-1)},
\end{equation}
and
\[
 M=2E(0)+\frac{2}{r+1}(C_E+\|\eta\|_{r+1})^{r+1}.
\]
Then
\begin{equation}\label{eq23}
 \Tmax\leq\frac{r+1}{a\mu\beta}
 \left(
 \frac{M+aC_P\sqrt M}
 {\kappa\mathcal R_0^{-p/(p-1)}(v_0,u_1)
           -3(E(0)+C_\eta/\nu)}
 \right)^{\beta/(r+1)}.
\end{equation}
\end{theorem}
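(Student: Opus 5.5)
We prove Theorem~\ref{thm:positive} by contradiction. Suppose \(\Tmax=\infty\), or more generally work on \([0,T]\) with \(T<\Tmax\). We run the same machinery as for Theorems~\ref{thm:negative} and \ref{thm:well}, with one change: the role of \(h_0>0\) is now played by the initial pairing \((v_0,u_1)\) rather than by a gap below \(0\) or below \(d\).

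\emph{Shifted multiplier and cumulative quantity.} Set \(v(t)=u(t)-u_0+v_0\). Since \(u(t)-u_0=\int_0^t u_s\,\dd s\in W_0^{1,p}(\Omega)\), the function \(v\) is an admissible test function. The product rule from the appendix then gives, for a.e. \(t\),
\[
\frac{\dd}{\dd t}(v,u_t)=\|u_t\|_2^2-\|\nabla u\|_2^2+\|u\|_{r+1}^{r+1}+(\nabla u,\nabla\eta)-(|u|^{r-1}u,\eta)-\int_\Omega|\nabla u_t|^{p-2}\nabla u_t\cdot\nabla v\,\dd x .
\]
We split \(\|u\|_{r+1}^{r+1}=\nu\cdot\frac{1}{r+1}\|u\|_{r+1}^{r+1}+\varepsilon\|u\|_{r+1}^{r+1}\) and substitute \(\frac{\nu}{r+1}\|u\|_{r+1}^{r+1}=\nu(\mathcal E-E)\). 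The \(\eta\)-terms are absorbed by Young's inequality: \(\delta\|\nabla u\|_2^2\) and \(\varepsilon\|u\|_{r+1}^{r+1}\) are available, and the remainder is exactly \(C_\eta\). This yields
\[
\frac{\dd}{\dd t}(v,u_t)\geq c_0\|u_t\|_2^2+(\tfrac\nu2-1-\delta)\|\nabla u\|_2^2-\nu E(t)-C_\eta-\mathcal R'(t)^{p-1}\mathcal R(t),
\]
where we used \(\|\nabla v\|_p\leq\|\nabla v_0\|_p+\int_0^t\|\nabla u_s\|_p\leq\mathcal R(t)\). Since \(E(t)\le E(0)\), the source term is bounded by \(-\nu E(t)-C_\eta\geq-\nu(E(0)+C_\eta/\nu)\). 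Unlike in the earlier theorems, this is a negative constant, to be compensated by the positive pairing.

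\emph{Weighted auxiliary functional.} With \(H(t)=E(0)+C_\eta/\nu-E(t)\ge C_\eta/\nu\), we define
\[
L(t)=H(t)^{1-\beta/(r+1)\cdot\ldots}+a\,\mathcal R(t)^{-\beta}(v,u_t)
\]
in the form used in Lemma~\ref{lem:functional}. The exponents \(\lambda,\mu,\beta\) come from \eqref{eq13}, and the weight is \(a=\kappa\mathcal R_0^{-\lambda/(p-1)}\) with \(\kappa\) given now by \eqref{eq22}. Differentiating, we handle the mixed term \(a\mathcal R^{-\beta}\mathcal R'^{p-1}\mathcal R\) against \(H'=\mathcal R'^p\) by Young's inequality at the point \(z=\mathcal R'/(\ldots)\). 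The positivity \(m>0\) in \eqref{eq14} handles the resulting one-variable inequality. The term \(\beta\mathcal R^{-\beta-1}\mathcal R'(v,u_t)\) is bounded using \(|(v,u_t)|\le C_P\mathcal R\|u_t\|_2\) and absorbed into \(c_0\|u_t\|_2^2\); this produces \(\widehat C\). In the regime \(p\le 3\) the leftover power \(\mathcal R^{3-p}\) is controlled by the lower bound \eqref{eq20} on \(\mathcal R_0\), through \(\widetilde C\). For \(p>3\) the weight decay already suffices. All of this is the content of Lemma~\ref{lem:functional}, which I would invoke verbatim with \(h_0\) replaced by \(2\lambda(E(0)+C_\eta/\nu)\). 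The outcome is
\[
L'(t)\geq a\mu\beta\,\frac{1}{r+1}\,(\ldots)\,L(t)^{1+\beta/(r+1)}\cdot\frac{1}{(M+aC_P\sqrt M)^{\beta/(r+1)}}
\]
on \([0,T]\), provided \(L\) stays positive.

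\emph{Initial positivity and conclusion.} The main new point is \(L(0)>0\). At \(t=0\), \(\mathcal R=\mathcal R_0\), and the negative constant \(-3(E(0)+C_\eta/\nu)\) enters \(L(0)\) after normalization. So
\[
L(0)\gtrsim \kappa\mathcal R_0^{-p/(p-1)}(v_0,u_1)-3\bigl(E(0)+C_\eta/\nu\bigr),
\]
and this is positive precisely by \eqref{eq21} together with the choice of \(\kappa\) in \eqref{eq22}. I expect this bookkeeping to be the obstacle: making the negative energy offset enter with coefficient exactly \(3\), and checking that the absorptions in Lemma~\ref{lem:functional} still close with \(\kappa\) defined through \(E(0)+C_\eta/\nu\) instead of \(h_0\). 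Once it holds, integrating the superlinear inequality forces \(L\) to blow up before the time in \eqref{eq23}. The bound on \(L\) from above by \(M\) (via \(\mathcal E\le 2E(0)+\frac2{r+1}\|u\|_{r+1}^{r+1}\) and \(\|u\|_{r+1}\le C_E\|\nabla v\|_p+\|\eta\|_{r+1}\)) shows this is impossible while the solution exists. Hence \(\Tmax\) obeys \eqref{eq23}, and Corollary~\ref{cor:continuation} gives \eqref{eq5}.
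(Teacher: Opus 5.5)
There is a genuine gap. It sits at the one step that is new compared with Theorems~\ref{thm:negative} and~\ref{thm:well}: how the energy offset enters the differential inequality.

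Put $K=E(0)+C_\eta/\nu>0$ and $h(t)=-E(t)-C_\eta/\nu$. Then $h'=(\mathcal R')^p$ and $h\ge -K$, but $h$ may stay negative for all time.

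You bound $-\nu E(t)-C_\eta=\nu h(t)$ below by the constant $-\nu K$. That discards exactly what the construction needs. In the proof of Lemma~\ref{lem:functional}, the $\mu$-part of $a\nu\mathcal R^\beta h$ is rewritten as $a\mu\mathcal R^\beta G$ plus the term $\frac{\mu}{\lambda}a^p\mathcal R^{\beta+\lambda}$. That term is what makes $\Phi$ positive; without it, $z^p-z^{p-1}-z<0$ near $z=1$. So you must keep $\nu h$, with $h$ of indefinite sign.

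Lemma~\ref{lem:functional} does not require $h\ge0$, so \eqref{eq38} holds. However, the term $a(\nu-\mu)\mathcal R^\beta h$ in \eqref{eq38} has no sign. Hypothesis \eqref{eq48} fails, so Lemma~\ref{lem:positive-h} cannot be invoked verbatim. The positive pairing does not repair this: it enters only $G(0)$, while this term is present in $G'$ at every time.

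The missing idea is to spend the leftover half of the $\Phi$-term. Since $\mu=(\lambda+\nu)/2$, you have $\Phi(z)\ge\mu/\lambda-1=(\nu-\mu)/\lambda$. The choice \eqref{eq22} gives $a^{p-1}\mathcal R^\lambda\ge a^{p-1}\mathcal R_0^\lambda=2\lambda K$. Hence
\[
 a(\nu-\mu)\mathcal R^\beta h+\tfrac12a^p\mathcal R^{\beta+\lambda}\Phi(z)
 \ge a(\nu-\mu)\mathcal R^\beta\Bigl(\frac{a^{p-1}\mathcal R^\lambda}{2\lambda}-K\Bigr)\ge0,
\]
and therefore $G'\ge a\mu\mathcal R^\beta G$. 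This is exactly why $\kappa$ is tied to $K$ in \eqref{eq22}. The $\widetilde C$-entry of \eqref{eq20} is then precisely \eqref{eq37}. It is needed for every $p>2$, not only for $p\le3$: when $p>3$, the factor $\kappa^{3-p}$ is large if $K$ is small, so your remark that "weight decay already suffices" is not correct.

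The coefficient $3$ then needs no extra bookkeeping, provided you keep the correction term $-\frac{a^{p-1}}{\lambda}\mathcal R^\lambda$ in the functional. Indeed, $G(0)=h(0)+\kappa\mathcal R_0^{-p/(p-1)}(v_0,u_1)-\kappa^{p-1}/\lambda=\kappa\mathcal R_0^{-p/(p-1)}(v_0,u_1)-K-2K$. Here the $K$ is $h(0)$, and the $2K$ is the price of the compensation above. So \eqref{eq21} is equivalent to $G(0)>0$.

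Your $L$ omits the correction term, and it also uses $\mathcal R^{-\beta}$ and a power of $H$. As a result it has neither the $-z$ needed in $z^p-z^{p-1}-z+\mu/\lambda$ nor the $-2K$ in its initial value.

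Your $H=h+E(0)+2C_\eta/\nu\ge0$ is harmless as a constant shift. But suppose the leftover constant is absorbed entirely through initial positivity, by working with $G-c$ for a constant $c$, instead of through the $\Phi$-compensation. Then, in the paper's normalization with $\kappa$ from \eqref{eq22}, you need $G(0)>(\nu-\mu)K/\mu$. That is strictly stronger than \eqref{eq21} and gives a different lifespan bound from \eqref{eq23}.

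Once $G'\ge a\mu\mathcal R^\beta G$ and $G(0)>0$ are established, the rest of your outline goes through as in Lemma~\ref{lem:positive-h}. That includes positivity via the integrating factor, the bound $G\le C_*\mathcal R^{r+1}$ from \eqref{eq49} (which does not use $h\ge0$), and the superlinear integration.
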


Fix the initial data \((u_0,u_1)\), the approximation \(v_0\),
and admissible choices of \(\varepsilon,\delta,\lambda,\mu\).
Set
\[
 \mathcal R_{\min}
 =\max\left\{
 1,\|\nabla v_0\|_p,\,
 \widetilde C
 \left(E(0)+\frac{C_\eta}{\nu}\right)^{(3-p)/2}
 \right\}.
\]
Then \eqref{eq20} requires
\(\mathcal R_0\geq\mathcal R_{\min}\).
For these fixed choices, the right-hand side of
\eqref{eq21} is strictly increasing in
\(\mathcal R_0\).
Thus, among the admissible values of \(\mathcal R_0\),
the choice \(\mathcal R_0=\mathcal R_{\min}\) gives the least
restrictive pairing condition.

With \(a\) chosen according to \eqref{eq22},
the pairing condition \eqref{eq21} is
equivalent to
\[
 G(0)
 =\kappa\mathcal R_0^{-p/(p-1)}(v_0,u_1)
 -3\left(E(0)+\frac{C_\eta}{\nu}\right)>0.
\]
The theorem gives a sufficient condition for finite-time
blow-up; failure of this condition does not imply global
existence.
Remark~\ref{rem:regular-data} gives the corresponding
criterion when \(u_0\in W_0^{1,p}(\Omega)\), for which one
may take \(v_0=u_0\) and \(C_\eta=0\).
Remark~\ref{rem:positive-class} shows that the class of
initial data satisfying the criterion is nonempty.

\begin{remark}[Initial displacement in the damping space]
\label{rem:regular-data}
In each of Theorems~\ref{thm:negative}, \ref{thm:well}, and
\ref{thm:positive}, if additionally
\(u_0\in W_0^{1,p}(\Omega)\), one may take directly
\[
v_0=u_0,\quad \eta=0,\quad C_\eta=0,
\]
and 
\[
v(t)=u(t),\quad Y(t)=(u(t),u_t(t)),\quad \forall t\in[0,\Tmax).\]
Indeed, all estimates for the shifted test function require
only \(v_0\in W_0^{1,p}(\Omega)\). For the direct choice above, 
\[
\begin{aligned}
 u(t)=u_0+\int_0^t u_s(s)\dd s,\quad \forall t\in[0,\Tmax),
\end{aligned}
\]
and 
\[
\begin{aligned}
 \|\nabla u(t)\|_p
\leq\|\nabla u_0\|_p+\int_0^t\|\nabla u_s(s)\|_p\dd s
 \leq\mathcal R(t),\quad \forall t\in[0,\Tmax),
\end{aligned}
\]
provided \(\mathcal R_0\geq\max\{1,\|\nabla u_0\|_p\}\).
Hence \(u\) is an admissible test function and the
approximation error vanishes. 
The energy assumptions and any additional condition on
the initial displacement--velocity pairing required by
the respective theorem still apply. In particular, in
Theorem~\ref{thm:positive}, \(\big(E(0)+\frac{C_\eta}{\nu}\big)=E(0)\) and
\eqref{eq21} reduces to
\[
 (u_0,u_1)>
 \frac{3}{(2\lambda)^{1/(p-1)}}
 (E(0))^{(p-2)/(p-1)}\mathcal R_0^{p/(p-1)}.
\]
\end{remark}

\section{Proofs of the main results}\label{sec3}
\begin{lemma}[Shifted displacement and multiplier identity]
\label{lem:shifted-multiplier}
Assume \eqref{eq6}, and let \(u\) be the maximal weak
solution of problem \eqref{eq1}.
Fix \(T\in(0,\Tmax)\) and \(v_0\in W_0^{1,p}(\Omega)\).
Define
\begin{equation}\label{eq24}
 \eta=u_0-v_0,\quad
 v(t)=u(t)-\eta
     =v_0+\int_0^t u_s(s)\dd s,
 \quad \forall t\in[0,T].
\end{equation}
Then \(v\) is an admissible test function
in \eqref{eq2}. Moreover, the function
\begin{equation}\label{eq25}
 Y(t):=(v(t),u_t(t)),\quad \forall t\in[0,T],
\end{equation}
is absolutely continuous on \([0,T]\) and satisfies
\begin{equation}\label{eq26}
\begin{aligned}
 Y'(t)
 ={}&\|u_t(t)\|_2^2-\|\nabla u(t)\|_2^2
       +\|u(t)\|_{r+1}^{r+1}
       +(\nabla u(t),\nabla\eta)\\
 &-\int_\Omega |u(t)|^{r-1}u(t)\eta\dd x
 -\int_\Omega |\nabla u_t(t)|^{p-2}
             \nabla u_t(t)\cdot\nabla v(t)\dd x
\end{aligned}
\end{equation}
for almost every \(t\in(0,T)\).
\end{lemma}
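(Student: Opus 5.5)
The proof has three parts: regularity of \(v\), admissibility of \(v\) as a test function, and the derivative formula for \(Y\) obtained from a product rule.

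\textbf{Regularity of \(v\).} Since \(u_t\in L^p(0,T;W_0^{1,p}(\Omega))\) and \(v_0\in W_0^{1,p}(\Omega)\), the Bochner integral gives
\[
v=v_0+\int_0^{\cdot}u_s\dd s\in W^{1,p}(0,T;W_0^{1,p}(\Omega))\subset C([0,T];W_0^{1,p}(\Omega)),
\]
with \(v_t=u_t\). Because \(u\in C([0,T];H_0^1(\Omega))\) has \(L^2\)-valued distributional derivative \(u_t\), the difference \(u(t)-u_0\) equals \(\int_0^t u_s\dd s\) in \(H_0^1(\Omega)\). This justifies the identity \(v=u-\eta\) in \eqref{eq24}.

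\textbf{Admissibility.} We need \(v\in H^1(0,T;L^2(\Omega))\cap L^p(0,T;W_0^{1,p}(\Omega))\). The second membership was just shown. For the first, \(v\in C([0,T];L^2)\) and \(v_t=u_t\in C([0,T];L^2)\subset L^2(0,T;L^2)\). So \(v\) is admissible in \eqref{eq2}.

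\textbf{Derivative of \(Y\).} I will use the product rule stated in the appendix, which I take as given: for a weak solution \(u\) and an admissible \(\varphi\), the map \(t\mapsto(u_t(t),\varphi(t))\) is absolutely continuous, and a.e.
\[
\frac{\mathrm d}{\mathrm dt}(u_t,\varphi)=(u_t,\varphi_t)-(\nabla u,\nabla\varphi)-\int_\Omega|\nabla u_t|^{p-2}\nabla u_t\cdot\nabla\varphi\dd x+\int_\Omega f(u)\varphi\dd x.
\]
Its proof works as follows. Apply \eqref{eq2} on \([0,t]\) and on \([0,t+h]\), subtract, and use that each space-time integrand is \(L^1\) in time. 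The integrand classes are:
\begin{itemize}[leftmargin=*]
\item \(u_t\varphi_t\) and \(\nabla u\cdot\nabla\varphi\): \(L^1\) in time directly.
\item The damping term: \(|\nabla u_t|^{p-1}\in L^{p'}(0,T;L^{p'})\) paired with \(\nabla\varphi\in L^p(0,T;L^p)\), so it is \(L^1\).
\item The source term: \(f(u)\in C([0,T];L^{(r+1)/r})\), since \(r+1\le6\), paired with \(\varphi\in C([0,T];L^{r+1})\).
\end{itemize}
This expresses \((u_t(t),\varphi(t))\) as \((u_1,\varphi(0))\) plus an integral of an \(L^1\) function, which gives absolute continuity and the formula.

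\textbf{Assembling \eqref{eq26}.} Take \(\varphi=v\), so \(v_t=u_t\), \(\nabla v=\nabla u-\nabla\eta\) and \(v=u-\eta\). Expanding
\[
-(\nabla u,\nabla u-\nabla\eta)+\int_\Omega|u|^{r-1}u(u-\eta)\dd x
\]
gives exactly \eqref{eq26}. The damping term stays paired with \(\nabla v\), which is legitimate because \(\nabla v(t)\in L^p\). One point needs checking: \(\eta\in H_0^1(\Omega)\) only, but the terms \((\nabla u,\nabla\eta)\) and \(\int_\Omega|u|^{r-1}u\,\eta\dd x\) remain finite by Sobolev embedding with \(r+1\le6\).

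\textbf{Main obstacle.} The delicate step is the product rule, specifically the time-localization of \eqref{eq2}. The test functions in \eqref{eq2} are required on all of \([0,T]\), and the boundary term \(\left.(u_t,\varphi)\right|_0^t\) must be well defined pointwise. This relies on the continuity \(u_t\in C([0,T];L^2)\) and \(v\in C([0,T];L^2)\). If the weak formulation is read with \(t\) arbitrary, as in Definition~\ref{def:weak}, the formula follows by differentiating \(t\mapsto\) the right-hand side, each integral of which is absolutely continuous in \(t\). I would cite the appendix lemma for this.
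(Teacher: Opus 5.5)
Your proof is correct, but it obtains \eqref{eq26} by a different and more elementary route than the paper.

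The paper argues through the second time derivative. It imports the extra regularity $u_{tt}\in L^{p'}(0,T;W^{-1,p'}(\Omega))$ from \cite{RW2012} and checks the hypotheses of the abstract product rule (Proposition~\ref{prop:product-rule}) for $w=u_t$, $z=v$. This gives $Y'(t)=\langle u_{tt}(t),v(t)\rangle+\|u_t(t)\|_2^2$. It then evaluates $\langle u_{tt}(t),v(t)\rangle$ by reading the equation in $W^{-1,p'}(\Omega)$ for a.e.\ $t$.

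You instead insert $\varphi=v$ directly into the integrated identity \eqref{eq2}, which Definition~\ref{def:weak} asserts for every $t\in[0,T]$. This gives $Y(t)=Y(0)+\int_0^t g(s)\dd s$ for all $t$, and your term-by-term checks correctly show $g\in L^1(0,T)$. Absolute continuity and $Y'=g$ a.e.\ then follow from the fundamental theorem of calculus for Lebesgue integrals. The subtraction over $[t,t+h]$ is not even needed.

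What your route buys is self-containedness. It needs neither $u_{tt}$ nor a duality product rule. Since the identity holds at every $t$, there is also no need to pick a continuous representative of $Y$.

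What the paper's route buys is an expression of the derivative through $\langle u_{tt},\cdot\rangle$. That would also cover multipliers whose time derivative lies only in $L^{p'}(0,T;X^*)$. This generality is not needed here, because $v$ is already an admissible test function.

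One correction: Proposition~\ref{prop:product-rule} in the appendix is the abstract product rule for $(w,z)_H$, not the identity you attribute to it. You should not cite it for that formula; your own derivation from \eqref{eq2} makes the citation unnecessary.
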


\begin{proof}
By \cite[Section~3.1]{RW2012}, we have
\begin{equation}\label{eq27}
 u_{tt}\in L^{p'}(0,T;W^{-1,p'}(\Omega)).
\end{equation}

Set
\[
 X=W_0^{1,p}(\Omega),\quad
 H=L^2(\Omega),\quad
 X^*=W^{-1,p'}(\Omega).
\]
Since \(u_t\in L^p(0,T;X)\), the Bochner integral
\[
 \widetilde v(t)=v_0+\int_0^t u_s(s)\dd s
\]
defines a function in \(W^{1,p}(0,T;X)\), with
\(\widetilde v_t=u_t\).
Its continuous \(X\)-valued representative satisfies
\(\widetilde v(0)=v_0\).
On the other hand, the weak-solution regularity gives
\[
 u(t)-u_0=\int_0^t u_s(s)\dd s
 \quad\text{in }H,\quad \forall t\in[0,T].
\]
Consequently, \(\widetilde v(t)=u(t)-\eta=v(t)\) for
every \(t\in[0,T]\). Thus
\[
 v\in W^{1,p}(0,T;X)\cap C([0,T];X),\quad
 v(0)=v_0,\quad
 v_t=u_t\quad\text{a.e. in }(0,T).
\]
Since \(p>2\), \(T<\infty\), and
\(X\hookrightarrow H\) continuously, we also have
\(v\in H^1(0,T;H)\).
Therefore \(v\) belongs to the test-function space
\(H^1(0,T;L^2(\Omega))
 \cap L^p(0,T;W_0^{1,p}(\Omega))\)
in \eqref{eq2}.

The preceding regularity and that of the weak solution give
\[
 u_t,v\in L^p(0,T;X)\cap C([0,T];H)
       \subset L^p(0,T;X)\cap L^2(0,T;H).
\]
Since \(v_t=u_t\in C([0,T];H)\) and
\(H\hookrightarrow X^*\) continuously,
\begin{equation*}
 v_t\in L^{p'}(0,T;W^{-1,p'}(\Omega)).
\end{equation*}
Together with \eqref{eq27}, this allows us to apply
Proposition~\ref{prop:product-rule} with \(w=u_t\) and \(z=v\).
Thus \(Y\) has an absolutely continuous representative,
which agrees with \(Y\) at every time because
\(u_t,v\in C([0,T];H)\).
Consequently,
\begin{equation}\label{eq28}
\begin{aligned}
 Y'(t)
 =\langle u_{tt}(t),v(t)\rangle
   +\langle v_t(t),u_t(t)\rangle=\langle u_{tt}(t),v(t)\rangle+\|u_t(t)\|_2^2
\end{aligned}
\end{equation}
for almost every \(t\in(0,T)\).
Here the second equality uses \(v_t=u_t\) and the
natural embedding \(H\hookrightarrow X^*\).

The weak equation, interpreted in \(X^*\), gives
\[
\begin{aligned}
 \langle u_{tt}(t),v(t)\rangle
 =-(\nabla u(t),\nabla v(t))
 -\int_\Omega|\nabla u_t(t)|^{p-2}
       \nabla u_t(t)\cdot\nabla v(t)\dd x
 +\int_\Omega|u(t)|^{r-1}u(t)v(t)\dd x
\end{aligned}
\]
for almost every \(t\in(0,T)\).
Combining this identity with \eqref{eq28} and
substituting \(v=u-\eta\) yields \eqref{eq26}.
\end{proof}

\begin{lemma}[Cumulative dissipation and damping estimate]
\label{lem:cumulative-dissipation}
Assume the hypotheses of Lemma~\ref{lem:shifted-multiplier}.
Choose
\[
 \mathcal R_0\geq\max\{1,\|\nabla v_0\|_p\},
\]
and define
\begin{equation}\label{eq29}
 \mathcal R(t)=\mathcal R_0+
       \int_0^t\|\nabla u_s(s)\|_p\dd s,
 \quad \forall t\in[0,T].
\end{equation}
Then \(\mathcal R\in W^{1,p}(0,T)\), and its absolutely
continuous representative is nondecreasing and satisfies
\begin{equation}\label{eq30}
 \mathcal R'(t)=\|\nabla u_t(t)\|_p,
 \quad\text{for a.e. }t\in(0,T).
\end{equation}
Furthermore,
\begin{equation}\label{eq31}
 \left|\int_\Omega|\nabla u_t(t)|^{p-2}
          \nabla u_t(t)\cdot\nabla v(t)\dd x\right|
 \leq \mathcal R(t)(\mathcal R'(t))^{p-1}
\end{equation}
for almost every \(t\in(0,T)\).
\end{lemma}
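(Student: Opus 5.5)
The proof splits into two parts: the regularity of \(\mathcal R\), and the damping bound \eqref{eq31}.

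\emph{Regularity of \(\mathcal R\).} Since \(u_t\in L^p(0,T;W_0^{1,p}(\Omega))\), the scalar function \(g(s)=\|\nabla u_s(s)\|_p\) is measurable. It lies in \(L^p(0,T)\subset L^1(0,T)\) because \(T<\infty\). Hence \(\mathcal R(t)=\mathcal R_0+\int_0^t g(s)\dd s\) is absolutely continuous. By Lebesgue's differentiation theorem, \(\mathcal R'=g\) a.e., which is \eqref{eq30}. Since \(\mathcal R'\in L^p(0,T)\) and \(\mathcal R\) is bounded, we get \(\mathcal R\in W^{1,p}(0,T)\). Monotonicity follows from \(g\geq0\).

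\emph{Pointwise bound on \(\nabla v\).} From the proof of Lemma~\ref{lem:shifted-multiplier}, \(v(t)=v_0+\int_0^t u_s(s)\dd s\) holds as a Bochner integral in \(X=W_0^{1,p}(\Omega)\), and \(v\in C([0,T];X)\). Minkowski's inequality for Bochner integrals, \(\|\int_0^t w\|_X\leq\int_0^t\|w\|_X\), then gives for every \(t\in[0,T]\)
\[
 \|\nabla v(t)\|_p\leq\|\nabla v_0\|_p+\int_0^t\|\nabla u_s(s)\|_p\dd s\leq\mathcal R(t),
\]
using \(\mathcal R_0\geq\|\nabla v_0\|_p\).

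\emph{Damping bound.} For a.e. \(t\), Hölder's inequality with exponents \(p'\) and \(p\) gives
\[
 \left|\int_\Omega|\nabla u_t|^{p-2}\nabla u_t\cdot\nabla v\dd x\right|
 \leq\bigl\||\nabla u_t|^{p-1}\bigr\|_{p'}\,\|\nabla v\|_p
 =\|\nabla u_t\|_p^{p-1}\|\nabla v\|_p .
\]
Combining this with \eqref{eq30} and the bound on \(\|\nabla v(t)\|_p\) yields \eqref{eq31}.

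The only delicate point is justifying that the \(X\)-valued Bochner representation of \(v\) agrees with the \(H\)-valued one, so that the norm estimate applies to \(v(t)\) itself. This was already established in Lemma~\ref{lem:shifted-multiplier}, so no real obstacle remains.
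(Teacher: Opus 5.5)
Your proposal is correct and follows essentially the same route as the paper: the fundamental theorem of calculus for the $L^p\subset L^1$ function $t\mapsto\|\nabla u_t(t)\|_p$, the integral representation \eqref{eq24} of $v$ to obtain $\|\nabla v(t)\|_p\leq\mathcal R(t)$, and H\"older's inequality with exponents $p'$ and $p$ for the damping term. The paper's proof also records $\mathcal R(t)\geq\mathcal R_0\geq 1$ and $(\mathcal R')^p=-E'$, which are used in later lemmas but are not part of this lemma's statement.
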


\begin{proof}
H\"older's inequality gives
\[
 \int_0^t\|\nabla u_s(s)\|_p\dd s
 \leq t^{(p-1)/p}
 \left(\int_0^t\|\nabla u_s(s)\|_p^p\dd s\right)^{1/p},
 \quad\forall t\in[0,T].
\]
Since \(u_t\in L^p(0,T;W_0^{1,p}(\Omega))\), the
right-hand side is finite and uniformly bounded on
\([0,T]\). Hence \(\mathcal R\) is bounded on \([0,T]\).
Moreover,
\(t\mapsto\|\nabla u_t(t)\|_p\) belongs to
\(L^p(0,T)\subset L^1(0,T)\).
Thus, by \eqref{eq29} and the fundamental theorem of calculus
for Lebesgue integrals, \(\mathcal R\) is absolutely continuous
on \([0,T]\), with \eqref{eq30} holding for almost
every \(t\in(0,T)\).
Since \(\mathcal R\) is bounded and
\(\mathcal R'\in L^p(0,T)\), it follows that
\(\mathcal R\in W^{1,p}(0,T)\).

Moreover, the nonnegativity of the integrand in \eqref{eq29}
implies that \(\mathcal R\) is nondecreasing and
\begin{equation}\label{eq32}
 \mathcal R(t)\geq\mathcal R_0\geq1,
 \quad\forall t\in[0,T].
\end{equation}
The energy identity \eqref{eq3} further yields
\[
 (\mathcal R'(t))^p=-E'(t),
 \quad\text{a.e. }t\in(0,T),
\]
and
\[
 \mathcal R(t)\leq\mathcal R_0+
 t^{(p-1)/p}\bigl(E(0)-E(t)\bigr)^{1/p},
 \quad\forall t\in[0,T].
\]
Using the integral representation of \(v\) in
\eqref{eq24}, we obtain
\begin{equation}\label{eq33}
 \|\nabla v(t)\|_p
 \leq\|\nabla v_0\|_p
      +\int_0^t\|\nabla u_s(s)\|_p\dd s
 \leq\mathcal R(t),
 \quad\forall t\in[0,T].
\end{equation}

For the nonlinear damping term, H\"older's inequality,
\eqref{eq30}, and \eqref{eq33} give
\[
\begin{aligned}
 &\left|\int_\Omega|\nabla u_t(t)|^{p-2}
          \nabla u_t(t)\cdot\nabla v(t)\dd x\right|\\
 &\quad\leq
 \left(\int_\Omega|\nabla u_t(t)|^{(p-1)p'}\dd x
 \right)^{1/p'}\|\nabla v(t)\|_p\\
 &\quad=\|\nabla u_t(t)\|_p^{p-1}\|\nabla v(t)\|_p
 \leq(\mathcal R'(t))^{p-1}\mathcal R(t),
 \quad\text{a.e. }t\in(0,T).
\end{aligned}
\]
This proves \eqref{eq31}.
\end{proof}

Since \(0<T<\Tmax\) was arbitrary, the preceding estimates
hold on every compact subinterval of \([0,\Tmax)\).
In the following lemmas and theorem proofs, all scalar functions
are considered on \([0,\Tmax)\).

\begin{lemma}\label{lem:functional}
Let \(\mathcal{R}(t)\) and \(Y(t)\) be defined by \eqref{eq29} and \eqref{eq25}, respectively.
Suppose \(h\) is locally absolutely continuous on
\([0,\Tmax)\) and satisfies, for almost everywhere $t\in(0,\Tmax)$,
\begin{equation}\label{eq34}
\begin{aligned}
 h'(t)=(\mathcal R'(t))^p,\quad
 Y'(t)\geq\nu h(t)+c_0\|u_t(t)\|_2^2
                    -\mathcal R(t)(\mathcal R'(t))^{p-1},
\end{aligned}
\end{equation}
where \(p<\nu\leq r+1\) and \(c_0>0\).
Choose
\begin{equation}\label{eq35}
 p<\lambda<\mu<\nu,\quad
 (3-p)\lambda<2,\quad
 \beta=\frac{\lambda-p}{p-1}.
\end{equation}
For \(a>0\), define the weighted auxiliary functional
\begin{equation}\label{eq36}
G(t)=h(t)+a\mathcal{R}^\beta(t) Y(t)-\frac{a^{p-1}}{\lambda}\mathcal{R}^\lambda(t), \quad \forall t\in[0,\Tmax).
\end{equation}
There are positive constants \(m\) and \(\widehat{C}\), depending only
on \(p,\lambda,\mu,c_0,\Omega\), such that, if
\begin{equation}\label{eq37}
 a^{3-p}\mathcal{R}_0^{\,2\beta+2-\lambda}\leq\frac{m}{2\widehat{C}},
\end{equation}
then
\begin{equation}\label{eq38}
\begin{aligned}
G'(t)\geq{}&
 a\mu \mathcal{R}^\beta(t) G(t)
 +a(\nu-\mu)\mathcal{R}^\beta(t) h(t)
 +\frac{ac_0}{2}\mathcal{R}^\beta(t)\|u_t(t)\|_2^2
 \\
 &+\frac12a^p\mathcal{R}^{\beta+\lambda}(t)
       \Phi\left(\frac{\mathcal{R}'(t)}{a\mathcal{R}^{\beta+1}(t)}\right),\quad \text{a.e. }t\in(0,\Tmax),
\end{aligned}
\end{equation}
where
\begin{equation*}
\Phi(z)=(z-1)(z^{p-1}-1)+\frac{\mu}{\lambda}-1,
 \quad z\geq0.
\end{equation*}
The constants \(m\) and \(\widehat{C}\) are given in \eqref{eq42} and \eqref{eq45}, respectively.
\end{lemma}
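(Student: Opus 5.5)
The plan is to differentiate \(G\), replace \(Y'\) by the lower bound \eqref{eq34}, rewrite \(h\) through \(G\) so that the term \(a\mu\mathcal R^\beta G\) appears, and then estimate the remaining mixed and damping terms. All functions \(h,\mathcal R,Y\) are locally absolutely continuous (Lemmas~\ref{lem:shifted-multiplier} and \ref{lem:cumulative-dissipation}), and \(\mathcal R\geq1\). Hence the product and chain rules give, a.e.,
\[
 G'=(\mathcal R')^p+a\beta\mathcal R^{\beta-1}\mathcal R'Y+a\mathcal R^\beta Y'-a^{p-1}\mathcal R^{\lambda-1}\mathcal R'.
\]
Into \(a\mathcal R^\beta Y'\) I insert \eqref{eq34} and split \(\nu h=\mu h+(\nu-\mu)h\). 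Using \eqref{eq36}, I substitute
\[
 \mu h=\mu G-a\mu\mathcal R^\beta Y+\frac{\mu a^{p-1}}{\lambda}\mathcal R^\lambda .
\]
This produces exactly \(a\mu\mathcal R^\beta G+a(\nu-\mu)\mathcal R^\beta h+ac_0\mathcal R^\beta\|u_t\|_2^2\), together with two mixed terms
\[
 a\beta\mathcal R^{\beta-1}\mathcal R'Y\quad\text{and}\quad -a^2\mu\mathcal R^{2\beta}Y,
\]
and the purely ``\(\mathcal R\)'' terms
\[
 (\mathcal R')^p-a\mathcal R^{\beta+1}(\mathcal R')^{p-1}-a^{p-1}\mathcal R^{\lambda-1}\mathcal R'+\frac{\mu}{\lambda}a^p\mathcal R^{\beta+\lambda}.
\]

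For the mixed terms I use \(|Y|\leq\|v\|_2\|u_t\|_2\leq C_P\mathcal R\|u_t\|_2\), which follows from \eqref{eq7} and \eqref{eq33}. Young's inequality, applied to each mixed term with half of \(\tfrac{ac_0}{2}\mathcal R^\beta\|u_t\|_2^2\), leaves \(\tfrac{ac_0}{2}\mathcal R^\beta\|u_t\|_2^2\) intact. The resulting errors are bounded by
\[
 a\frac{C_P^2}{c_0}\bigl(\beta^2\mathcal R^\beta(\mathcal R')^2+a^2\mu^2\mathcal R^{3\beta+2}\bigr)\le a\widehat C\bigl(\mathcal R^\beta(\mathcal R')^2+a^2\mathcal R^{3\beta+2}\bigr),
\]
with \(\widehat C\) as in \eqref{eq14}.

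Next I rescale via \(z=\mathcal R'/(a\mathcal R^{\beta+1})\geq0\). The key exponent identity is \(p(\beta+1)=\beta+\lambda\), which holds because \((p-1)(\beta+1)=\lambda-1\). With it, the purely \(\mathcal R\) terms become exactly \(a^p\mathcal R^{\beta+\lambda}\Phi(z)\), since \(z^p-z^{p-1}-z+\mu/\lambda=\Phi(z)\). The Young errors become \(a^3\widehat C(1+z^2)\mathcal R^{3\beta+2}\). So it suffices to show
\[
 a^3\widehat C(1+z^2)\mathcal R^{3\beta+2}\leq\tfrac12 a^p\mathcal R^{\beta+\lambda}\Phi(z),
\]
because the remaining half \(\tfrac12 a^p\mathcal R^{\beta+\lambda}\Phi(z)\) is precisely the last term of \eqref{eq38}. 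With \(m\) from \eqref{eq14} we have \(\Phi(z)\geq m(1+z^2)\), so the inequality reduces to \(a^{3-p}\mathcal R^{2\beta+2-\lambda}\leq m/(2\widehat C)\). The exponent equals \(2\beta+2-\lambda=((3-p)\lambda-2)/(p-1)<0\) by \eqref{eq35}. Since \(\mathcal R\geq\mathcal R_0\), this is implied by \eqref{eq37}.

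The step I expect to need the most care is the positivity \(m>0\) together with the explicit lower bound \(m_*\). The function \(\Phi\) is continuous, and \((z-1)(z^{p-1}-1)\geq0\) for all \(z\geq0\), so \(\Phi\geq\mu/\lambda-1>0\); moreover \(\Phi(z)/(1+z^2)\to\infty\) as \(z\to\infty\) because \(p>2\). Hence the infimum is attained or approached on a compact set and is positive. For \(m_*\), I would split into two cases. On \([0,2]\), \(\Phi/(1+z^2)\geq(\mu/\lambda-1)/5\). For \(z\geq2\), I bound \((z-1)(z^{p-1}-1)\geq(z-1)(2^{p-2}z-1)\) and compare the result with \(\tfrac15(2^{p-1}-1)(1+z^2)\); I would use \(z-1\geq z/2\) and treat the constants by elementary estimates. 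The rest of the argument is bookkeeping of exponents, which I expect to be routine once the identity \(p(\beta+1)=\beta+\lambda\) is in place.
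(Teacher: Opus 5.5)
Your proposal is correct and follows the paper's proof essentially step for step. The shared steps are: differentiating $G$; rewriting $a\mu\mathcal R^\beta h$ through $G$; the Young splitting with $\widehat C=C_P^2(\beta^2+\mu^2)/c_0$; the rescaling $z=\mathcal R'/(a\mathcal R^{\beta+1})$ with $p(\beta+1)=\beta+\lambda$; and the negativity of $2\beta+2-\lambda$, which reduces everything to \eqref{eq37}. The only deviation is in the explicit bound on $z\ge2$: the paper uses $z^{p-1}-1\ge(1-2^{1-p})z^{p-1}$ and $1+z^2\le\frac54z^2$. Your comparison of $(z-1)(2^{p-2}z-1)$ with $\frac15(2^{p-1}-1)(1+z^2)$ gives the same constant $m_*$. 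It holds with equality at $z=2$ and no slack there, so your ``elementary estimates'' must show the difference is increasing on $[2,\infty)$. It is, because that difference is a convex quadratic with positive slope at $z=2$.
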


\begin{proof}
The parameters in \eqref{eq35} exist by the
argument in \eqref{eq13}, with \(\nu\) in place
of \(r+1\).  
Since \(h\), \(\mathcal{R}\), and \(Y\) are locally absolutely
continuous and \(\mathcal{R}(t)\geq1\) by \eqref{eq32}, the function \(G\) defined
in \eqref{eq36} is locally absolutely continuous. 
Differentiating \eqref{eq36} and using
\eqref{eq34} gives
\begin{equation}\label{eq39}
\begin{aligned}
 G'(t)&=h'(t)+a\mathcal{R}^\beta(t) Y'(t)
+a\beta \mathcal{R}^{\beta-1}(t)\mathcal{R}'(t)Y(t)
-a^{p-1}\mathcal{R}^{\lambda-1}(t)\mathcal{R}'(t)\\
&\geq
 (\mathcal{R}'(t))^p+a\nu \mathcal{R}^\beta(t) h(t)+ac_0\mathcal{R}^\beta(t)\|u_t(t)\|_2^2
 -a\mathcal{R}^{\beta+1}(t)(\mathcal{R}'(t))^{p-1}\\
 &\quad+a\beta \mathcal{R}^{\beta-1}(t)\mathcal{R}'(t)Y(t)
 -a^{p-1}\mathcal{R}^{\lambda-1}(t)\mathcal{R}'(t),\quad \text{a.e. }t\in(0,\Tmax).
\end{aligned}
\end{equation}
Using \eqref{eq36} again, one obtains
\begin{equation*}
\begin{aligned}
a\nu \mathcal{R}^\beta(t) h(t)&=a\mu \mathcal{R}^\beta(t) h(t)+a(\nu-\mu)\mathcal{R}^\beta(t) h(t)\\
&=a\mu \mathcal{R}^\beta(t) G(t)-a^2\mu \mathcal{R}^{2\beta}(t)Y(t)
      +\frac{\mu}{\lambda}a^p\mathcal{R}^{\beta+\lambda}(t)\\
&\quad+a(\nu-\mu)\mathcal{R}^\beta(t) h(t),\quad \forall t\in[0,\Tmax).
\end{aligned}
\end{equation*}
Substituting this expression into \eqref{eq39} yields
\begin{equation}\label{eq40}
\begin{aligned}
 G'(t)&\geq
 a\mu \mathcal{R}^\beta(t) G(t)+a(\nu-\mu)\mathcal{R}^\beta(t) h(t)
 +ac_0\mathcal{R}^\beta(t)\|u_t(t)\|_2^2\\
 &\quad+(\mathcal{R}'(t))^p-a\mathcal{R}^{\beta+1}(t)(\mathcal{R}'(t))^{p-1}
       -a^{p-1}\mathcal{R}^{\lambda-1}(t)\mathcal{R}'(t)
       +\frac{\mu}{\lambda}a^p\mathcal{R}^{\beta+\lambda}(t)
\\
 &\quad+\bigl(a\beta \mathcal{R}^{\beta-1}(t)\mathcal{R}'(t)
                    -a^2\mu \mathcal{R}^{2\beta}(t)\bigr)Y(t),\quad \text{a.e. }t\in(0,\Tmax).
\end{aligned}
\end{equation}

By \eqref{eq30}, put
\[
z=\frac{\mathcal{R}'(t)}{a\mathcal{R}^{\beta+1}(t)}\geq0, \quad \text{a.e. }t\in(0,\Tmax).
\]
Then using $
p(\beta+1)=\beta+\lambda$
by \eqref{eq35} 
, one obtains 
\begin{equation}\label{eq41}
\begin{aligned}
 &(\mathcal{R}'(t))^p-a\mathcal{R}^{\beta+1}(t)(\mathcal{R}'(t))^{p-1}
       -a^{p-1}\mathcal{R}^{\lambda-1}(t)\mathcal{R}'(t)
       +\frac{\mu}{\lambda}a^p\mathcal{R}^{\beta+\lambda}(t)\\
     &\quad=a^p\mathcal{R}^{\beta+\lambda}(t)
       \left(z^p-z^{p-1}-z+\frac{\mu}{\lambda}\right)
 \\&\quad=a^p\mathcal{R}^{\beta+\lambda}(t)\Phi(z),\quad \text{a.e. }t\in(0,\Tmax).
\end{aligned}
\end{equation}
For \(z\geq0\), the factors \(z-1\) and \(z^{p-1}-1\)
have the same sign. Consequently,
\[
 \Phi(z)=(z-1)(z^{p-1}-1)+\frac{\mu}{\lambda}-1\geq\frac{\mu}{\lambda}-1>0.
\]
For \(0\leq z\leq2\), positivity gives
\[
 \frac{\Phi(z)}{1+z^2}\geq\frac{\mu/\lambda-1}{5}.
\]
For \(z\geq2\), we have \(z-1\geq z/2\),
\(z^{p-1}-1\geq(1-2^{1-p})z^{p-1}\), and
\(1+z^2\leq5z^2/4\). Since \(p>2\), it follows that
\[
 \frac{\Phi(z)}{1+z^2}
 \geq\frac25(1-2^{1-p})z^{p-2}
 \geq\frac{2^{p-1}-1}{5}.
\]
Consequently,
\begin{equation}\label{eq42}
 m:=\inf_{z\geq0}\frac{\Phi(z)}{1+z^2}
 \geq\frac15\min\left\{\frac\mu\lambda-1,\ 2^{p-1}-1\right\}>0.
\end{equation}

By \eqref{eq25}, the Cauchy--Schwarz inequality,
\eqref{eq7}  and \eqref{eq33},
\begin{equation}\label{eq43}
\begin{aligned}
|Y(t)|&\leq\|v(t)\|_2\|u_t(t)\|_2
 \leq C_P\|\nabla v(t)\|_p\|u_t(t)\|_2\\
 &\leq C_P\mathcal{R}(t)\|u_t(t)\|_2,\quad \forall t\in[0,\Tmax).
\end{aligned}
\end{equation}
It follows from \eqref{eq43} and Young's inequality that 
\begin{equation}\label{eq44}
\begin{aligned}
 &\bigl|\bigl(a\beta \mathcal{R}^{\beta-1}(t)\mathcal{R}'(t)
                    -a^2\mu \mathcal{R}^{2\beta}(t)\bigr)Y(t)\bigr|\\
 &\leq a\beta C_P\mathcal{R}^\beta(t) \mathcal{R}'(t)\|u_t(t)\|_2+a^2\mu C_P\mathcal{R}^{2\beta+1}(t)\|u_t(t)\|_2\\
 &\leq\frac{ac_0}{2}\mathcal{R}^\beta(t)\|u_t(t)\|_2^2
       +\frac{a\beta^2C_P^2}{c_0}\mathcal{R}^\beta(t)(\mathcal{R}'(t))^2
       +\frac{a^3\mu^2C_P^2}{c_0}\mathcal{R}^{3\beta+2}(t)\\
       &\leq\frac{ac_0}{2}\mathcal{R}^\beta(t)\|u_t(t)\|_2^2
       + \widehat{C}a^3\mathcal{R}^{3\beta+2}(t)(1+z^2),\quad \text{a.e. }t\in(0,\Tmax),
\end{aligned}
\end{equation}
with 
\begin{equation}
\label{eq45}
\widehat{C}=\frac{C_P^2}{c_0}(\beta^2+\mu^2).
\end{equation}
Here we used \(\mathcal{R}'(t)=a\mathcal{R}^{\beta+1}(t)z$ for almost everywhere $t\in(0,\Tmax)$ in the last inequality.

Recall \(\mathcal{R}(t)\geq \mathcal{R}_0\) in \eqref{eq32} and 
\begin{equation}
\label{eq46}
2\beta+2-\lambda=\frac{(3-p)\lambda-2}{p-1}<0
\end{equation}
by \eqref{eq35}.
In particular, the factor
\(a^{3-p}\mathcal R(t)^{2\beta+2-\lambda}\) is bounded above by its
value at \(\mathcal R_0\).  
Thus \eqref{eq37} and \eqref{eq42} imply
\begin{equation*}
\begin{aligned}
\widehat{C}a^3\mathcal{R}^{3\beta+2}(t)(1+z^2)
 &=\widehat{C}a^{3-p}\mathcal{R}^{2\beta+2-\lambda}(t)
                  a^p\mathcal{R}^{\beta+\lambda}(t)(1+z^2)\\
 &\leq\frac{m}{2}a^p\mathcal{R}^{\beta+\lambda}(t)(1+z^2)\\
 &\leq\frac12a^p\mathcal{R}^{\beta+\lambda}(t)\Phi(z),\quad \text{a.e. }t\in(0,\Tmax).
\end{aligned}
\end{equation*}
Substituting this expression into \eqref{eq44}, we get 
\begin{equation}\label{eq47}
\begin{aligned}
 &\bigl|\bigl(a\beta \mathcal{R}^{\beta-1}(t)\mathcal{R}'(t)
                    -a^2\mu \mathcal{R}^{2\beta}(t)\bigr)Y(t)\bigr|\\
 &\quad\leq\frac{ac_0}{2}\mathcal{R}^\beta(t)\|u_t(t)\|_2^2
       + \frac12a^p\mathcal{R}^{\beta+\lambda}(t)\Phi(z),\quad \text{a.e. }t\in(0,\Tmax).
\end{aligned}
\end{equation}
Combining \eqref{eq41} and \eqref{eq47} with \eqref{eq40}
proves \eqref{eq38}.
\end{proof}

\begin{lemma}\label{lem:positive-h}
Assume the hypotheses of Lemma~\ref{lem:functional}.
Suppose in addition that
\begin{equation}\label{eq48}
h(t)\geq h(0)>0, \quad \forall t\in[0,\Tmax),
\end{equation}
and, for some constant \(M>0\),
\begin{equation}\label{eq49}
h(t)\leq M\mathcal{R}^{r+1}(t),
 \quad
 \|u_t(t)\|_2^2\leq M\mathcal{R}^{r+1}(t), \quad \forall t\in[0,\Tmax).
\end{equation}
Then \(\mathcal{R}_0\) and \(a\) can be chosen so that \(G(0)>0\)
and \eqref{eq37} holds. For these choices,
\begin{equation}\label{eq50}
G'(t)\geq a\mu C_*^{-\beta/(r+1)}
                    G^{1+\beta/(r+1)}(t), \quad \text{a.e. }t\in(0,\Tmax),
\end{equation}
and
\begin{equation}\label{eq51}
 \Tmax\leq
 \frac{r+1}{a\mu\beta}
 \left(\frac{C_*}{G(0)}\right)^{\beta/(r+1)}
 <\infty.
\end{equation}
Here $C_*=M+aC_P\sqrt M.$ 
\end{lemma}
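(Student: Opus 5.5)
The plan has three steps: choose \(\mathcal R_0\) and \(a\) as in \eqref{eq15}, read off the superlinear inequality \eqref{eq50} from \eqref{eq38}, and integrate it.

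\emph{Choice of parameters.} Take \(h_0=h(0)\), \(\kappa=(\lambda h_0/4)^{1/(p-1)}\) and \(a=\kappa\mathcal R_0^{-\lambda/(p-1)}\), so that \(a^{p-1}\mathcal R_0^{\lambda}=\kappa^{p-1}=\lambda h_0/4\). This gives
\[
\frac{a^{p-1}}{\lambda}\mathcal R_0^\lambda=\frac{h_0}{4}.
\]
Since \(\beta=(\lambda-p)/(p-1)\), we also get \(a\mathcal R_0^\beta=\kappa\mathcal R_0^{-p/(p-1)}\). Hence
\[
G(0)\ge h_0-\kappa\mathcal R_0^{-p/(p-1)}|(v_0,u_1)|-\frac{h_0}{4}\ge\frac{h_0}{2}
\]
as soon as \(\mathcal R_0^{p/(p-1)}\ge 4\kappa|(v_0,u_1)|/h_0\), which is the third entry of \eqref{eq15}. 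For \eqref{eq37}, compute
\[
a^{3-p}\mathcal R_0^{2\beta+2-\lambda}=\kappa^{3-p}\mathcal R_0^{(2\beta+2-\lambda)-(3-p)\lambda/(p-1)}.
\]
The exponent equals \(\frac{(3-p)\lambda-2-(3-p)\lambda}{p-1}=-\frac{2}{p-1}\) by \eqref{eq46}. So \eqref{eq37} reads \(\kappa^{3-p}\mathcal R_0^{-2/(p-1)}\le m/(2\widehat C)\), which is exactly the fourth entry of \eqref{eq15}. The first two entries keep Lemma~\ref{lem:cumulative-dissipation} in force, so Lemma~\ref{lem:functional} applies and gives \eqref{eq38}.

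\emph{The differential inequality.} Every term on the right of \eqref{eq38} except the first is nonnegative, because \(\nu>\mu\), \(h>0\) by \eqref{eq48}, and \(\Phi>0\). So \(G'\ge a\mu\mathcal R^\beta G\) almost everywhere. Since \(G(0)>0\), a Gronwall argument on the absolutely continuous \(G\) shows \(G\) is nondecreasing and \(G(t)\ge G(0)>0\). To turn \(\mathcal R^\beta\) into a power of \(G\), bound \(G\) from above. Drop the negative last term of \eqref{eq36}, then use \eqref{eq43} and \eqref{eq49} together with \(\mathcal R\ge1\) and \(\beta>0\):
\[
G\le h+aC_P\mathcal R^{\beta+1}\|u_t\|_2\le M\mathcal R^{r+1}+aC_P\sqrt M\,\mathcal R^{\beta+1+(r+1)/2}.
\]
We need \(\beta+1+(r+1)/2\le r+1\), i.e. \(\beta+1\le(r+1)/2\). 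Here \(\beta+1=(\lambda-1)/(p-1)\), and this is where I expect the main obstacle. Try first: since \(\lambda<\nu\le r+1\) and \(p>2\), we have \((\lambda-1)/(p-1)<\lambda-1\). That does not obviously give \((r+1)/2\), so the exponents need a finer comparison, perhaps using \((3-p)\lambda<2\) when \(p<3\). Alternatively, keep \(\mathcal R^\beta\) and \(G\) linked more carefully, e.g.\ write \(\mathcal R^{\beta}\ge(G/C_*)^{\beta/q}\) with \(q\) the largest exponent that appears. If that fails, I would return to the statement's exponent \(\beta/(r+1)\) and check that the constraints in \eqref{eq13} indeed force \(\beta+1+(r+1)/2\le r+1\). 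Granting this, \(G\le C_*\mathcal R^{r+1}\), so \(\mathcal R^\beta\ge (G/C_*)^{\beta/(r+1)}\), and substituting into \(G'\ge a\mu\mathcal R^\beta G\) gives \eqref{eq50}.

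\emph{Lifespan bound.} Set \(\theta=\beta/(r+1)>0\). Because \(G\ge G(0)>0\), \eqref{eq50} gives
\[
\frac{\mathrm d}{\mathrm dt}G^{-\theta}=-\theta G^{-\theta-1}G'\le-\theta a\mu C_*^{-\theta}
\]
almost everywhere. Integrating, \(0<G(t)^{-\theta}\le G(0)^{-\theta}-\theta a\mu C_*^{-\theta}t\) for every \(t<\Tmax\). Hence \(t<\frac{1}{\theta a\mu}(C_*/G(0))^{\theta}\), which is \eqref{eq51}.
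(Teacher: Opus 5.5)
Your proposal follows the paper's proof step for step. The steps are: the explicit choice of $\kappa$, $a$, $\mathcal R_0$ from \eqref{eq15}, giving $G(0)\geq h(0)/2$ and \eqref{eq37}; dropping the nonnegative terms of \eqref{eq38} to get $G'\geq a\mu\mathcal R^\beta G$ and $G>0$; the upper bound $G\leq C_*\mathcal R^{r+1}$; and integrating $G^{-\beta/(r+1)}$. The one gap is the exponent comparison $\beta+1\leq(r+1)/2$, which you leave as ``granting this.'' You also consider replacing $\beta/(r+1)$ by $\beta/q$ for some larger exponent $q$. That fallback would not prove \eqref{eq50}--\eqref{eq51} as stated, because the power of $G$ and the resulting lifespan bound would both change.

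The missing step follows in one line from an identity you already used when checking \eqref{eq37}. By \eqref{eq46},
\[
2\beta+2-\lambda=\frac{(3-p)\lambda-2}{p-1}<0.
\]
The sign comes from the hypothesis $(3-p)\lambda<2$ in \eqref{eq35}. That hypothesis holds automatically when $p\geq3$, so no restriction to $p<3$ is needed. Hence $2(\beta+1)<\lambda<\nu\leq r+1$, that is, $\beta+1<(r+1)/2$. Therefore $\beta+1+(r+1)/2<r+1$, and since $\mathcal R\geq1$ you get $\mathcal R^{\beta+1+(r+1)/2}\leq\mathcal R^{r+1}$, which gives $G\leq C_*\mathcal R^{r+1}$.

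Your attempt through $(\lambda-1)/(p-1)<\lambda-1$ throws away exactly the information needed. The required inequality $\frac{\lambda-1}{p-1}<\frac{\lambda}{2}$ is equivalent to $(3-p)\lambda<2$. With this line inserted, your proof is complete and coincides with the paper's.
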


\begin{proof}
First choose \(\kappa>0\) so small that
\begin{equation}\label{eq52}
 \frac{\kappa^{p-1}}{\lambda}\leq\frac{h(0)}4.
\end{equation}
The initial quantities \(h(0)\) and \(Y(0)\) are fixed before
choosing \(\mathcal{R}_0\).
For sufficiently large \(\mathcal{R}_0\), choose 
\begin{equation*}
a=\kappa \mathcal{R}_0^{-\lambda/(p-1)}.
\end{equation*}
Then by \eqref{eq36} and \eqref{eq35}, 
\begin{equation*}
G(0)=h(0)+\kappa \mathcal{R}_0^{-p/(p-1)}Y(0)
                    -\frac{\kappa^{p-1}}{\lambda}.
\end{equation*}
Choose \(\mathcal{R}_0\) large enough that
\[
 \kappa \mathcal{R}_0^{-p/(p-1)}|Y(0)|\leq\frac{h(0)}4.
\]
Together with \eqref{eq52}, this results in
\begin{equation*}
G(0)\geq\frac{h(0)}2>0.
\end{equation*}
At the same time, \eqref{eq46} gives
\begin{align*}
a^{3-p}\mathcal{R}_0^{2\beta+2-\lambda}
 &=\kappa^{3-p}
 \mathcal{R}_0^{-\lambda(3-p)/(p-1)+2\beta+2-\lambda}
 \\
 &=\kappa^{3-p}\mathcal{R}_0^{-2/(p-1)}
 \to0
 \quad\text{as }\mathcal{R}_0\to \infty.
\end{align*}
Thus \(\mathcal{R}_0\) can also be chosen to satisfy
\eqref{eq37} and
\(\mathcal{R}_0\geq\max\{1,\|\nabla v_0\|_p\}\).

By \eqref{eq48}, all terms after
\(a\mu \mathcal{R}^\beta(t) G(t)\) in \eqref{eq38} are nonnegative.
Hence
\begin{equation}\label{eq53}
G'(t)\geq a\mu \mathcal{R}^\beta(t) G(t), \quad \text{a.e. }t\in(0,\Tmax).
\end{equation}
Multiplying \eqref{eq53} by the integrating
factor \(e^{-a\mu\int_0^t\mathcal R^\beta(s)\dd s}\), we obtain
\[
 \frac{\dd}{\dd t}
 \left[e^{-a\mu\int_0^t\mathcal R^\beta(s)\dd s}G(t)\right]
 \geq0,
 \quad\text{a.e. }t\in(0,\Tmax).
\]
Integrating and using \(G(0)>0\), we conclude that
\[
 G(t)\geq
 G(0)e^{a\mu\int_0^t\mathcal R^\beta(s)\dd s}>0,
 \quad \forall t\in[0,\Tmax).
\]

Next, by \eqref{eq43} and \eqref{eq49},
\[
|Y(t)|\leq C_P\sqrt M\,\mathcal{R}^{1+(r+1)/2}(t), \quad \forall t\in[0,\Tmax).
\]
Because \(2\beta+2<\lambda<r+1\) by \eqref{eq46}, one has 
\begin{equation}\label{eq54}
 \beta+1+\frac{r+1}{2}<r+1.
\end{equation}
Since \(\mathcal{R}(t)\geq1\),
dropping the negative term in \eqref{eq36}
and using \eqref{eq54} and \eqref{eq49} yields
\begin{equation}\label{eq55}
\begin{aligned}
0<G(t)
 &\leq h(t)+a\mathcal{R}^\beta(t)|Y(t)|\\
 &\leq M\mathcal{R}^{r+1}(t)
       +aC_P\sqrt M\,\mathcal{R}^{\beta+1+(r+1)/2}(t)\\
 &\leq C_*\mathcal{R}^{r+1}(t),\quad \forall t\in[0,\Tmax).
\end{aligned}
\end{equation}
It follows that
\[
\mathcal{R}^\beta(t)
\geq\left(\frac{G(t)}{C_*}\right)^{\beta/(r+1)}
=C_*^{-\beta/(r+1)}G^{\beta/(r+1)}(t),\quad \forall t\in[0,\Tmax).
\]
Substitution into \eqref{eq53} proves
\eqref{eq50}.

Differentiating the negative power and using \eqref{eq50}, one obtains 
\[
\begin{aligned}
 \frac{\dd}{\dd t}G^{-\beta/(r+1)}(t)
 &\leq-\frac{\beta}{r+1}
   G(t)^{-1-\beta/(r+1)}
   \left[
   a\mu C_*^{-\beta/(r+1)}
   G^{1+\beta/(r+1)}(t)
   \right]\\
 &=-\frac{\beta}{r+1}
   a\mu C_*^{-\beta/(r+1)}, \quad \text{a.e. }t\in(0,\Tmax).
\end{aligned}
\]
Integrating, we obtain
\begin{equation*}
 0<G(t)^{-\beta/(r+1)}
 \leq G(0)^{-\beta/(r+1)}
 -\frac{a\mu\beta}{r+1}C_*^{-\beta/(r+1)}t,
 \quad\forall t\in[0,\Tmax).
\end{equation*}
Consequently,
\[
t<\frac{r+1}{a\mu\beta}
 \left(\frac{C_*}{G(0)}\right)^{\beta/(r+1)}, \quad\forall t\in[0,\Tmax).
\]
Taking the supremum over \(t\in[0,\Tmax)\) yields  \eqref{eq51}.
\end{proof}

With these lemmas in hand, we now turn to the proofs of the main theorems.
\subsection{Proof of Theorem~\ref{thm:negative}}
Choose \(\varepsilon\in(0,1)\) and \(\delta>0\) so that
\begin{equation}\label{eq56}
 \nu=(1-\varepsilon)(r+1)>p,\quad
 \frac{\nu}{2}-1-\delta>0.
\end{equation}
This is possible because \(2<p<r+1\).

We specify the error estimate in \eqref{eq26}.
For the elastic cross term, Young's inequality gives
\begin{equation}\label{eq57}
|(\nabla u(t),\nabla\eta)|
 \leq\delta\|\nabla u(t)\|_2^2
             +\frac1{4\delta}\|\nabla\eta\|_2^2, \quad \forall t\in[0,\Tmax).
\end{equation}
By H\"older's inequality and Young's inequality with
conjugate exponents \((r+1)/r\) and \(r+1\), we obtain
\begin{equation}\label{eq58}
\begin{aligned}
 \left|\int_\Omega|u(t)|^{r-1}u(t)\eta\dd x\right|
 &\leq\|u(t)\|_{r+1}^r\|\eta\|_{r+1}\\
 &\leq\varepsilon\|u(t)\|_{r+1}^{r+1}
 +\frac{r^r}{(r+1)^{r+1}\varepsilon^r}\|\eta\|_{r+1}^{r+1},\quad \forall t\in[0,\Tmax).
\end{aligned}
\end{equation}
For this fixed choice of \(\varepsilon,\delta\), write
\begin{equation}\label{eq59}
 C_\eta:=
 \frac1{4\delta}\|\nabla\eta\|_2^2
 +\frac{r^r}{(r+1)^{r+1}\varepsilon^r}
                            \|\eta\|_{r+1}^{r+1}.
\end{equation}

Substituting \eqref{eq57},
\eqref{eq58}, and \eqref{eq31}
into \eqref{eq26}, we get
\begin{equation}\label{eq60}
\begin{aligned}
 Y'(t)\geq{}&\|u_t(t)\|_2^2-(1+\delta)\|\nabla u(t)\|_2^2
                 +(1-\varepsilon)\|u(t)\|_{r+1}^{r+1}\\
 &-C_\eta-\mathcal R(t)(\mathcal R'(t))^{p-1},\quad \text{a.e. }t\in(0,\Tmax).
\end{aligned}
\end{equation}
Using the definition of \(E(t)\) and
\(\nu=(1-\varepsilon)(r+1)\) in
\eqref{eq56}, we rewrite the source term as
\[
 (1-\varepsilon)\|u(t)\|_{r+1}^{r+1}
 =-\nu E(t)+\frac{\nu}{2}\|u_t(t)\|_2^2
          +\frac{\nu}{2}\|\nabla u(t)\|_2^2.
\]
Substituting this identity into \eqref{eq60}
yields
\begin{equation}\label{eq61}
\begin{aligned}
Y'(t)\geq{}&
 -\nu E(t)+\left(1+\frac{\nu}{2}\right)\|u_t(t)\|_2^2
 +\left(\frac{\nu}{2}-1-\delta\right)\|\nabla u(t)\|_2^2
 \\
 &-C_\eta-\mathcal{R}(t)(\mathcal{R}'(t))^{p-1},\quad \text{a.e. }t\in(0,\Tmax).
\end{aligned}
\end{equation}

By the density of
\(C_c^\infty(\Omega)\) in \(H_0^1(\Omega)\), we may choose
\(v_0\in C_c^\infty(\Omega)\) so that, with
\(\eta=u_0-v_0\), the norm \(\|\nabla\eta\|_2\) is
arbitrarily small. The Sobolev inequality
\eqref{eq10} then gives
\[
 \|\eta\|_{r+1}\leq R^{1/(r+1)}\|\nabla\eta\|_2,
\]
and hence
\[
 C_\eta
 \leq \frac{1}{4\delta}\|\nabla\eta\|_2^2
 +\frac{r^r R}
 {(r+1)^{r+1}\varepsilon^r}
 \|\nabla\eta\|_2^{r+1}
 \to0
 \quad\text{as }v_0\to u_0\text{ in }H_0^1(\Omega).
\]
Since \(E(0)<0\), we can therefore fix such an
approximation \(v_0\) satisfying
\begin{equation}
\label{eq62}
C_\eta<\frac{\nu}{2}(-E(0)).
\end{equation}

Define
\begin{equation*}
h(t)=-E(t)-\frac{C_\eta}{\nu}, \quad \forall t\in[0,\Tmax).
\end{equation*}
The energy identity \eqref{eq3} and \eqref{eq62} imply
\[
 h(t)\geq h(0)>\frac{-E(0)}{2}>0,
 \quad\forall t\in[0,\Tmax),
\]
and
\begin{equation}
\label{eq63}
h'(t)=\|\nabla u_t(t)\|_p^p=(\mathcal{R}'(t))^p, \quad\text{a.e. }t\in(0,\Tmax).
\end{equation}
Dropping the positive gradient term in
\eqref{eq61} yields
\[
Y'(t)\geq\nu h(t)+\left(1+\frac{\nu}{2}\right)\|u_t(t)\|_2^2
                      -\mathcal{R}(t)(\mathcal{R}'(t))^{p-1},\quad \text{a.e. }t\in(0,\Tmax).
\]
Hence \eqref{eq34} holds with
\(c_0=1+\nu/2\).

We next verify the upper bounds required by
Lemma~\ref{lem:positive-h}.
Using \(u(t)=v(t)+\eta,\ \mathcal{R}(t)\geq1\) for all $t\in[0,\Tmax)$, together with
\eqref{eq7} and \eqref{eq33}, we obtain
\begin{equation}\label{eq64}
\|u(t)\|_{r+1}
 \leq C_E \mathcal{R}(t)+\|\eta\|_{r+1}
 \leq(C_E+\|\eta\|_{r+1})\mathcal{R}(t), \quad \forall t\in[0,\Tmax).
\end{equation}
Since \(E(t)<0\), we have 
\[
h(t)\leq-E(t)\leq\frac1{r+1}\|u(t)\|_{r+1}^{r+1}, \quad \forall t\in[0,\Tmax),
\]
and
\[
\begin{aligned}
 \|u_t(t)\|_2^2
 &=2E(t)-\|\nabla u(t)\|_2^2
       +\frac2{r+1}\|u(t)\|_{r+1}^{r+1}\\
 &\leq\frac2{r+1}\|u(t)\|_{r+1}^{r+1},\quad \forall t\in[0,\Tmax).
\end{aligned}
\]
Together with \eqref{eq64}, these inequalities result in \eqref{eq49} with the explicit constant
\[
 M=\frac{2}{r+1}(C_E+\|\eta\|_{r+1})^{r+1}.
\]

We now verify the explicit parameter choices
\eqref{eq14}--\eqref{eq15}
when \(h(0)=h_0>0\), \(Y(0)=(v_0,u_1)\), and
\(c_0=1+\nu/2\). By the definition of \(\kappa\),
\[
 \frac{\kappa^{p-1}}{\lambda}=\frac{h_0}{4}.
\]
Moreover, the choice of \(\mathcal{R}_0\) in \eqref{eq15} ensures that
\[
 \mathcal{R}_0^{p/(p-1)}
 \geq\frac{4\kappa|(v_0,u_1)|}{h_0},
 \quad
 \mathcal{R}_0^{2/(p-1)}
 \geq\frac{2\widehat{C}}{m}\kappa^{3-p}.
\]
Consequently,
\[
 \kappa \mathcal{R}_0^{-p/(p-1)}|(v_0,u_1)|
 \leq\frac{h_0}{4},
\]
and, using \(a=\kappa \mathcal{R}_0^{-\lambda/(p-1)}\),
\[
 a^{3-p}\mathcal{R}_0^{2\beta+2-\lambda}
 =\kappa^{3-p}\mathcal{R}_0^{-2/(p-1)}
 \leq\frac{m}{2\widehat{C}}.
\]
Thus the condition \eqref{eq37}
is satisfied. Furthermore,
\[
 a\mathcal{R}_0^\beta=\kappa \mathcal{R}_0^{-p/(p-1)},
 \quad
 \frac{a^{p-1}}{\lambda}\mathcal{R}_0^\lambda=\frac{h_0}{4},
\]
so the initial value of \(G\) by \eqref{eq36} satisfies
\begin{equation}\label{eq65}
 \begin{aligned}
 G(0)
 &=\frac{3h_0}{4}
   +\kappa \mathcal{R}_0^{-p/(p-1)}(v_0,u_1)\\
 &\geq\frac{3h_0}{4}
   -\kappa \mathcal{R}_0^{-p/(p-1)}|(v_0,u_1)|
 \geq\frac{h_0}{2}>0.
 \end{aligned}
\end{equation}

The preceding estimates verify all hypotheses of
Lemma~\ref{lem:positive-h} on every compact subinterval
of \([0,\Tmax)\), with the same constant
$
 M=\frac{2}{r+1}(C_E+\|\eta\|_{r+1})^{r+1}.
$
Applying the lifespan bound \eqref{eq51} and \eqref{eq65}, we obtain
\[
 \Tmax
 \leq\frac{r+1}{a\mu\beta}
 \left(\frac{M+aC_P\sqrt M}{G(0)}\right)^{\beta/(r+1)}
 \leq\frac{r+1}{a\mu\beta}
 \left(\frac{2(M+aC_P\sqrt M)}{h_0}\right)^{\beta/(r+1)}.
\]
This proves \eqref{eq16}. In particular,
\(\Tmax<\infty\), and the continuation alternative
in Corollary~\ref{cor:continuation} yields
\[
 \limsup_{t\uparrow\Tmax}
 \left(\|u_t(t)\|_2^2+\|\nabla u(t)\|_2^2\right)=\infty.
\]

\subsection{Potential well theory}\label{sec3.2}

We first justify \eqref{eq8} and \eqref{eq12}.
For any nonzero \(w\in H_0^1(\Omega)\), differentiation shows
\[
 \frac{\mathrm d}{\mathrm d\tau}J(\tau w)
 =\tau\|\nabla w\|_2^2-\tau^r\|w\|_{r+1}^{r+1}.
\]
Since \(r>1\), the function \(\tau\mapsto J(\tau w)\)
attains its maximum on \([0,\infty)\) at
\[
 \tau_w=
 \left(\frac{\|\nabla w\|_2^2}
            {\|w\|_{r+1}^{r+1}}\right)^{1/(r-1)}.
\]
Moreover, \(\tau_w w\in\mathcal N\), and every element of
\(\mathcal N\) is obtained in this way, with \(\tau_w=1\)
when \(w\in\mathcal N\). This proves the equality of the
variational definitions in \eqref{eq8}. At the maximum,
\begin{align*}
 J(\tau_w w)
 =\frac{r-1}{2(r+1)}\tau_w^2\|\nabla w\|_2^2=\frac{r-1}{2(r+1)}
 \left(\frac{\|\nabla w\|_2^{r+1}}
            {\|w\|_{r+1}^{r+1}}\right)^{2/(r-1)}.
\end{align*}
Taking the infimum over nonzero \(w\) and using
\eqref{eq9} gives
\[
 d=\frac{r-1}{2(r+1)}R^{-2/(r-1)}
   =\mathcal G(s_0)=\widetilde d.
\]
The formula also holds at \(r+1=6\), since the argument
does not require the Sobolev supremum to be attained.

\begin{lemma}[Invariance of \(\mathscr W_2\)]
\label{lem:well}
Suppose \(E(0)<d\) and \(u_0\in\mathscr W_2\). Then
\begin{equation}\label{eq66}
 u(t)\in\mathscr W_2,\quad\forall t\in[0,\Tmax).
\end{equation}
Moreover, there is a unique \(\tilde{s}>s_0\) satisfying
\begin{equation}\label{eq67}
 \mathcal G(\tilde{s})=\frac12\tilde{s}^2-\frac{R}{r+1}\tilde{s}^{r+1}=E(0),
\end{equation}
and
\begin{equation}\label{eq68}
 \|\nabla u(t)\|_2\geq \tilde{s}>s_0,
 \quad\forall t\in[0,\Tmax).
\end{equation}
\end{lemma}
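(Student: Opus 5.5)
The plan is the classical Payne--Sattinger argument, adapted to weak solutions via the energy identity \eqref{eq3}. Since $E$ is nonincreasing, $J(u(t))\le E(t)\le E(0)<d$ for all $t\in[0,\Tmax)$, so $u(t)\in\mathscr W$ throughout. The continuity of $t\mapsto u(t)$ in $H_0^1(\Omega)$ then makes both $t\mapsto\|\nabla u(t)\|_2^2$ and $t\mapsto\|u(t)\|_{r+1}^{r+1}$ continuous; the latter uses $H_0^1(\Omega)\hookrightarrow L^{r+1}(\Omega)$ with $r+1\le 6$.

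\textbf{Step 1 (invariance).} Suppose \eqref{eq66} fails. Since $u_0\in\mathscr W_2$ and $\mathscr W_2$ is defined by a strict inequality, there is a first time $t_1>0$ with
\[
\|\nabla u(t_1)\|_2^2=\|u(t_1)\|_{r+1}^{r+1},
\]
while $u(t)\in\mathscr W_2$ on $[0,t_1)$. Two cases arise.
\begin{itemize}
\item If $u(t_1)\neq0$, then $u(t_1)\in\mathcal N$. By \eqref{eq8}, $J(u(t_1))\ge d$, which contradicts $J(u(t_1))\le E(0)<d$.
\item If $u(t_1)=0$, I rule this out by a uniform lower bound. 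For $w\in\mathscr W_2$, \eqref{eq10} gives $\|\nabla w\|_2^2<\|w\|_{r+1}^{r+1}\le R\|\nabla w\|_2^{r+1}$, hence $\|\nabla w\|_2>R^{-1/(r-1)}=s_0$. By continuity, $\|\nabla u(t_1)\|_2\ge s_0>0$, so $u(t_1)\neq0$.
\end{itemize}
This proves \eqref{eq66}.

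\textbf{Step 2 (the root $\tilde s$).} On $[s_0,\infty)$ the function $\mathcal G$ is strictly decreasing, since $\mathcal G'(s)=s(1-Rs^{r-1})<0$ for $s>s_0$. It runs from $\mathcal G(s_0)=d$ (by \eqref{eq12}) down to $-\infty$. Because $E(0)<d$, there is a unique $\tilde s>s_0$ with $\mathcal G(\tilde s)=E(0)$, which is \eqref{eq67}.

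\textbf{Step 3 (the bound \eqref{eq68}).} For every $t$,
\[
\mathcal G(\|\nabla u(t)\|_2)\le J(u(t))\le E(t)\le E(0)=\mathcal G(\tilde s).
\]
By Step 1, $u(t)\in\mathscr W_2$, so the lower bound from the second case of Step 1 gives $\|\nabla u(t)\|_2>s_0$. This places $\|\nabla u(t)\|_2$ on the decreasing branch of $\mathcal G$, and the inequality above then forces $\|\nabla u(t)\|_2\ge\tilde s$.

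\textbf{Main obstacle.} The delicate point is Step 1: one must exclude the degenerate crossing through $u=0$, which is not in $\mathcal N$, and one needs the first-exit time to be well defined. Both are handled by the continuity in $H_0^1(\Omega)$ and the gap $\|\nabla w\|_2>s_0$ on $\mathscr W_2$. Note that this argument does not need the supremum in \eqref{eq9} to be attained.
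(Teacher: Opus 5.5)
Your proposal is correct and follows essentially the same route as the paper. Both arguments use the energy bound $J(u(t))\le E(0)<d$, then a first-crossing argument in which the gap $\|\nabla u(t)\|_2>s_0$ on $\mathscr W_2$ rules out passage through $u=0$, forcing the crossing point onto $\mathcal N$; the root $\tilde s$ and the bound \eqref{eq68} then follow from the monotonicity of $\mathcal G$ on its decreasing branch. Your split into the cases $u(t_1)\neq0$ and $u(t_1)=0$ simply spells out the paper's single continuity step.
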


\begin{proof}
The energy identity implies
\[
J(u(t))\leq E(t)\leq E(0)<d, \quad \forall t\in[0,\Tmax),
\]
so \(u(t)\in\mathscr W\) throughout the existence interval.

At any time \(t\in[0,\Tmax)\) for which
\(u(t)\in\mathscr W_2\), \eqref{eq9} implies 
\[
\begin{aligned}
0<\|\nabla u(t)\|_2^2<\|u(t)\|_{r+1}^{r+1}\leq R\|\nabla u(t)\|_2^{r+1}.
\end{aligned}
\]
Dividing by \(\|\nabla u(t)\|_2^2\) and recalling \eqref{eq11}, we obtain
\begin{equation}\label{eq69}
\|\nabla u(t)\|_2>R^{-1/(r-1)}=s_0.
\end{equation}
The maps
\(t\mapsto\|\nabla u(t)\|_2^2-\|u(t)\|_{r+1}^{r+1}\)
and \(t\mapsto\|\nabla u(t)\|_2\) are continuous.
If the first map reached zero for the first time at
\(t_0\in(0,\Tmax)\),
\eqref{eq69} and continuity would give
\(\|\nabla u(t_0)\|_2\geq s_0>0\). Hence
\(u(t_0)\in\mathcal N\), and the definition of \(d\) would imply
\[
E(t_0)=\frac12\|u_t(t_0)\|_2^2+J(u(t_0))\geq d.
\]
contrary to \(E(t_0)\leq E(0)<d\). Hence
\(\|\nabla u(t)\|_2^2<\|u(t)\|_{r+1}^{r+1}\) for all $t\in[0,\Tmax)$, proving \eqref{eq66}. In particular,
\eqref{eq69} now applies at every such time.

For the quantitative gap, differentiate \(\mathcal G\), then 
\[
 \mathcal G'(s)=s-Rs^r=s(1-Rs^{r-1}).
\]
Obviously, \(\mathcal G\) is strictly decreasing on
\((s_0,\infty)\), with \(\mathcal G(s_0)=d>E(0)\) and
\(\mathcal G(s)\to-\infty\) as \(s\to\infty\).
This proves existence and uniqueness of \(\tilde{s}\) in
\eqref{eq67}.

Finally,
\[
\mathcal G(\|\nabla u(t)\|_2)
 \leq J(u(t))\leq E(t)\leq E(0)=\mathcal G(\tilde{s}), \quad \forall t\in[0,\Tmax).
\]
By \eqref{eq69}, both arguments lie on
this decreasing branch, so \(\|\nabla u(t)\|_2\geq \tilde{s}\) for all $t\in[0,\Tmax)$. This proves \eqref{eq68}.
\end{proof}

\begin{proof}[Proof of Theorem~\ref{thm:well}]
Let \(E(0)<d\) and \(u_0\in\mathscr W_2\). Set
\begin{equation*}
h(t)=d-E(t), \quad \forall t\in[0,\Tmax).
\end{equation*}
Then \eqref{eq63} holds and 
\[
 h(t)\geq d-E(0)>0, \quad\forall t\in[0,\Tmax).
\]

We now give explicit upper bounds for \(\varepsilon\) and
\(\delta\) that ensure \eqref{eq17}. By Lemma~\ref{lem:well}, we have \(\tilde s>s_0\).
Together with \eqref{eq12}, this shows
\[
  \mathcal{A}=\frac{r-1}{2}\tilde s^2-(r+1)d
 =\frac{r-1}{2}(\tilde s^2-s_0^2)>0.
\]
Set
\[
\mathcal{B}=(r+1)\left(\frac{\tilde{s}^2}{2}-d\right).
\]
Using \((r+1)d=(r-1)s_0^2/2\), we obtain
\(\mathcal{B}= \mathcal{A}+\tilde{s}^2>0\).
We may therefore choose
\[
 0<\varepsilon<
 \min\left\{1-\frac{p}{r+1},\,
             \frac{\mathcal{A}}{4\mathcal{B}}\right\},
 \quad
 0<\delta<\frac{\mathcal{A}}{4\tilde{s}^2},
 \quad
 \nu=(1-\varepsilon)(r+1).
\]
The first restriction on \(\varepsilon\) ensures
\(\nu>p\). Moreover,
\[
\begin{aligned}
 \left(\frac{\nu}{2}-1-\delta\right)\tilde{s}^2-\nu d
 &=
 \frac{r-1}{2}\tilde{s}^2-(r+1)d
 -\varepsilon(r+1)\left(\frac{\tilde{s}^2}{2}-d\right)
 -\delta \tilde{s}^2\\
 &=\mathcal{A}-\varepsilon \mathcal{B}-\delta \tilde{s}^2>\frac{\mathcal{A}}{2}>0.
\end{aligned}
\]
Since $\nu d>0$, we also have $\frac{\nu}{2}-1-\delta>0.$

Choose \(v_0\in C_c^\infty(\Omega)\) approximating \(u_0\),
and define \(\eta,v,Y,\mathcal{R}\) as before. The estimate \eqref{eq60} remains valid,
with \(C_\eta\) defined by \eqref{eq59}.
By the definition of the energy and \(h(t)=d-E(t)\), we have
\[
 \|u(t)\|_{r+1}^{r+1}
 =(r+1)\left(
 \frac12\|u_t(t)\|_2^2
 +\frac12\|\nabla u(t)\|_2^2+h(t)-d
 \right),
 \quad t\in[0,\Tmax).
\]
Substituting this expression into \eqref{eq60}
and using \(\nu=(1-\varepsilon)(r+1)\), we obtain
\begin{equation}\label{eq70}
\begin{aligned}
 Y'(t)\geq{}&\nu h(t)+\left(1+\frac\nu2\right)\|u_t(t)\|_2^2
 +\left(\frac\nu2-1-\delta\right)\|\nabla u(t)\|_2^2\\
 &-\nu d-C_\eta-\mathcal R(t)(\mathcal R'(t))^{p-1},\quad \text{a.e. }t\in(0,\Tmax).
\end{aligned}
\end{equation}
The parameters \(\varepsilon,\delta\) have already been fixed.
Choose the approximation so accurately that
\[
 C_\eta<
 \left(\frac{\nu}{2}-1-\delta\right)\tilde{s}^2-\nu d.
\]
By \eqref{eq68}, the gradient term in
\eqref{eq70} absorbs both \(\nu d\) and \(C_\eta\).
Hence
\[
Y'(t)\geq\nu h(t)+\left(1+\frac\nu2\right)\|u_t(t)\|_2^2
                   -\mathcal R(t)(\mathcal R'(t))^{p-1}, \quad \text{a.e. }t\in(0,\Tmax).
\]
This proves \eqref{eq34} with
\(c_0=1+\nu/2\).

It remains to check \eqref{eq49}. We have
\[
h(t)=d-E(t)\leq d+\frac1{r+1}\|u(t)\|_{r+1}^{r+1}, \quad \forall t\in[0,\Tmax),
\]
and
\begin{equation*}
\begin{aligned}
 \|u_t(t)\|_2^2
 &=2E(t)-\|\nabla u(t)\|_2^2+\frac2{r+1}\|u(t)\|_{r+1}^{r+1}\\
 &\leq2\max\{E(0),0\}+\frac2{r+1}\|u(t)\|_{r+1}^{r+1},\quad \forall t\in[0,\Tmax).
\end{aligned}
\end{equation*}
 Inequality \eqref{eq64} and \(\mathcal{R}\geq1\) now give
\eqref{eq49} with
\[
 M=d+2\max\{E(0),0\}
       +\frac{2}{r+1}(C_E+\|\eta\|_{r+1})^{r+1}.
\]
With \(h_0=d-E(0)\), the parameter choices in
Theorem~\ref{thm:well} and \eqref{eq65} show
\(G(0)\geq h_0/2=(d-E(0))/2\). Applying \eqref{eq51} from Lemma~\ref{lem:positive-h}
then yields \eqref{eq18}. 
In particular, \(\Tmax<\infty\), and
Corollary~\ref{cor:continuation} yields the asserted blow-up
of the natural energy norm.
\end{proof}

\begin{proof}[Proof of Corollary~\ref{cor:nonpositive}]
If \(E(0)\leq0\), then
\[
 \|u_0\|_{r+1}^{r+1}
 \geq\frac{r+1}{2}
        \bigl(\|u_1\|_2^2+\|\nabla u_0\|_2^2\bigr).
\]
Therefore
\[
 \|\nabla u_0\|_2^2-\|u_0\|_{r+1}^{r+1}\leq
 -\frac{r+1}{2}\|u_1\|_2^2
 -\frac{r-1}{2}\|\nabla u_0\|_2^2<0
\]
for every nonzero initial state. Moreover,
\(J(u_0)\leq E(0)\leq0<d\), so \(u_0\in\mathscr W_2\).
Theorem~\ref{thm:well} applies.
\end{proof}

\subsection{Proof of the positive-energy criterion}

\begin{proof}[Proof of Theorem~\ref{thm:positive}]
We use the same error estimates as in the negative-energy proof.  Estimate \eqref{eq61} still holds with \(C_\eta\)
defined by \eqref{eq59}. 
Let us also set 
\[
h(t)=-E(t)-\frac{C_\eta}{\nu}, \quad \forall t\in[0,\Tmax).
\]
Then \eqref{eq63} holds and 
\[
 h(t)\geq h(0)=-\big(E(0)+\frac{C_\eta}{\nu}\big),\quad\forall t\in[0,\Tmax).
\]
Since \(\nu/2-1-\delta>0\), the gradient term in \eqref{eq61} can be
discarded. With \(c_0=1+\nu/2\), this yields
\[
Y'(t)\geq\nu h(t)+c_0\|u_t(t)\|_2^2
                  -\mathcal R(t)(\mathcal R'(t))^{p-1},\quad \text{a.e. }t\in(0,\Tmax).
\]
Thus the comparison assumptions \eqref{eq34}
hold with the parameters specified in the theorem.

Use the constants \(m,\widehat C,\widetilde C\) fixed in
Theorem~\ref{thm:positive}.  By \eqref{eq22},
\[
 a^{p-1}\mathcal R_0^\lambda=2\lambda \big(E(0)+\frac{C_\eta}{\nu}\big).
\]
Using \eqref{eq46}, \eqref{eq20}, and
\eqref{eq19}, we obtain
\[
 \begin{aligned}
 a^{3-p}\mathcal R_0^{2\beta+2-\lambda}
 &=\kappa^{3-p}\mathcal R_0^{-2/(p-1)}\\
 &=(2\lambda \big(E(0)+\frac{C_\eta}{\nu}\big))^{(3-p)/(p-1)}
                          \mathcal R_0^{-2/(p-1)}
 \leq\frac{m}{2\widehat C}.
 \end{aligned}
\]
Consequently, condition \eqref{eq37}
holds, and Lemma~\ref{lem:functional} applies to
\[
G(t)=h(t)+a\mathcal R(t)^\beta Y(t)
                       -\frac{a^{p-1}}{\lambda}\mathcal R(t)^\lambda, \quad \forall t\in[0,\Tmax).
\]

Since \(h\) may be negative in the positive-energy case,
we must first show that the last term in \eqref{eq38}
compensates for the possibly negative contribution
\(a(\nu-\mu)\mathcal R(t)^\beta h(t)\).
Recall that \(\mu=(\lambda+\nu)/2\), so
\[
 \Phi(z)\geq\frac{\mu}{\lambda}-1
 =\frac{\nu-\mu}{\lambda}>0,
 \quad z\geq0.
\]
Using
\(h(t)\geq-\bigl(E(0)+C_\eta/\nu\bigr)\)
and setting
\(z=\mathcal R'(t)/(a\mathcal R(t)^{\beta+1})\),
we obtain
\[
\begin{aligned}
 &a(\nu-\mu)\mathcal R(t)^\beta h(t)
       +\frac12a^p\mathcal R(t)^{\beta+\lambda}\Phi(z)\\
 &\quad\geq
 -a(\nu-\mu)\left(E(0)+\frac{C_\eta}{\nu}\right)
       \mathcal R(t)^\beta
 +\frac{\nu-\mu}{2\lambda}
       a^p\mathcal R(t)^{\beta+\lambda}\\
 &\quad=
 a(\nu-\mu)\mathcal R(t)^\beta
 \left(
 \frac{a^{p-1}\mathcal R(t)^\lambda}{2\lambda}
 -\left(E(0)+\frac{C_\eta}{\nu}\right)
 \right)
 \geq0,\quad \text{a.e. }t\in(0,\Tmax).
\end{aligned}
\]
Here the final inequality follows from
\(\mathcal R(t)\geq\mathcal R_0\) and
$
 a^{p-1}\mathcal R_0^\lambda
 =2\lambda\left(E(0)+\frac{C_\eta}{\nu}\right).
$
Therefore, it follows from \eqref{eq38} that
\[
G'(t)\geq a\mu\mathcal R(t)^\beta G(t), \quad \text{a.e. }t\in(0,\Tmax).
\]
At the initial time, 
\eqref{eq22} yields
\[
 \begin{aligned}
 G(0)&=-\big(E(0)+\frac{C_\eta}{\nu}\big)+\kappa\mathcal R_0^{-p/(p-1)}(v_0,u_1)
                         -\frac{\kappa^{p-1}}{\lambda}\\
 &=\kappa\mathcal R_0^{-p/(p-1)}(v_0,u_1)-3\big(E(0)+\frac{C_\eta}{\nu}\big)>0,
 \end{aligned}
\]
where positivity is exactly \eqref{eq21}.
The integrating-factor argument used in \eqref{eq53}
therefore yields \[G(t)>0,\ \forall t\in[0,\Tmax).\]

To obtain the upper bound for \(G(t)\), recall \eqref{eq64}. 
Moreover, \(C_\eta\geq0\) and \(E(t)\leq E(0)\) implies
\[
\begin{aligned}
 h(t)&=-E(t)-\frac{C_\eta}{\nu}
       \leq\frac1{r+1}\|u(t)\|_{r+1}^{r+1},\\
 \|u_t(t)\|_2^2
 &=2E(t)-\|\nabla u(t)\|_2^2+\frac2{r+1}\|u(t)\|_{r+1}^{r+1}\\
 &\leq2E(0)+\frac2{r+1}\|u(t)\|_{r+1}^{r+1},\quad \forall t\in[0,\Tmax).
\end{aligned}
\]
Hence \eqref{eq49} holds with the constant
\[
 M=2E(0)+\frac{2}{r+1}(C_E+\|\eta\|_{r+1})^{r+1}
\]
specified in the theorem. The upper bound in \eqref{eq55} remains valid
without assuming \(h\geq0\).
Together with the positivity of \(G\) and the inequality
\(G'(t)\geq a\mu\mathcal R^\beta(t) G(t)\) established above,
the same argument leading to \eqref{eq51} gives
\[
 \Tmax\leq\frac{r+1}{a\mu\beta}
 \left(\frac{M+aC_P\sqrt M}{G(0)}\right)^{\beta/(r+1)}
 <\infty.
\]
Substituting the computed value of \(G(0)\) yields
\eqref{eq23}, and
Corollary~\ref{cor:continuation} gives \eqref{eq5}.\end{proof}

\begin{remark}[Nonemptiness of the positive-energy class]
\label{rem:positive-class}
For every prescribed positive energy level \(E(0)\), there
exist initial data satisfying the criterion in
Theorem~\ref{thm:positive}.
Fix admissible \(\varepsilon,\delta,\nu,\lambda\)
independently of the data below.
Choose a nonzero \(\varphi\in C_c^\infty(\Omega)\) and set
\[
 u_0=L\varphi,\quad u_1=b_L\varphi,
\]
where
\[
 b_L=
 \left(
 \frac{2E(0)-L^2\|\nabla\varphi\|_2^2
       +\dfrac{2}{r+1}L^{r+1}\|\varphi\|_{r+1}^{r+1}}
      {\|\varphi\|_2^2}
 \right)^{1/2}.
\]
Since \(r+1>2\), \(b_L\) is well defined and positive
for sufficiently large \(L\), and these data have
exactly the prescribed energy.

Take \(v_0=u_0\), so that \(C_\eta=0\), and choose
\[
 \mathcal R_0=
 \max\left\{
 1,L\|\nabla\varphi\|_p,
 \widetilde C E(0)^{(3-p)/2}
 \right\}.
\]
Then \eqref{eq20} holds, and
\(\mathcal R_0=L\|\nabla\varphi\|_p\) for sufficiently
large \(L\). Moreover, the definition of \(b_L\) and \(r+1>2\) imply
\[
 \lim_{L\to\infty}\frac{b_L}{L^{(r+1)/2}}
 =
 \left(
 \frac{2\|\varphi\|_{r+1}^{r+1}}
 {(r+1)\|\varphi\|_2^2}
 \right)^{1/2}>0.
\]
Since \(r+1>p>2\), we have
\[
 \frac{r+3}{2}>\frac{p+2}{2}>
 \frac{p}{p-1}.
\]
Consequently,
\[
 \lim_{L\to\infty}
 \frac{(v_0,u_1)}{\mathcal R_0^{p/(p-1)}}
 =
 \lim_{L\to\infty}
 \frac{Lb_L\|\varphi\|_2^2}
      {\mathcal R_0^{p/(p-1)}}
 =+\infty.
\]
As \(E(0)\) and the auxiliary parameters are fixed,
\eqref{eq21} follows for sufficiently large \(L\).
\end{remark}

\appendix
\section{Auxiliary results}

\begin{proposition}[{\cite[Proposition~3.2]{RW2012}}]
\label{prop:product-rule}
Let \(H\) be a Hilbert space and \(X\) a Banach space such that
\(X\hookrightarrow H\hookrightarrow X^*\) continuously and densely.
Let \(1<p<\infty\), \(p'=p/(p-1)\), and suppose that
\[
 w,z\in L^p(0,T;X)\cap L^2(0,T;H),\quad
 w',z'\in L^{p'}(0,T;X^*).
\]
Then \((w,z)_H\) has an absolutely continuous representative and
\[
 \frac{\dd}{\dd t}(w(t),z(t))_H
 =\langle w'(t),z(t)\rangle+\langle z'(t),w(t)\rangle, \quad\text{a.e. }t\in(0,T).
\]
If \(w,z\in C([0,T];H)\), their scalar product agrees with
this representative at every time.
\end{proposition}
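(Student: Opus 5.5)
The plan is to prove the product rule by regularization. The formula is first established for smooth vector-valued functions, and then extended by density, using the continuity of both sides in the norms of the spaces
\[
 W_p:=\{w\in L^p(0,T;X)\cap L^2(0,T;H):\ w'\in L^{p'}(0,T;X^*)\}.
\]

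\emph{Step 1 (approximation).} Extend \(w,z\) beyond \([0,T]\) by reflection across the endpoints and mollify in time. This produces \(w_n,z_n\in C^1([0,T];X)\) with
\[
 w_n\to w \text{ in } L^p(0,T;X)\cap L^2(0,T;H),\qquad w_n'\to w' \text{ in } L^{p'}(0,T;X^*),
\]
and likewise for \(z_n\). Alternatively, one can mollify on a slightly enlarged interval and restrict, to avoid boundary issues. For \(C^1\) functions with values in \(X\subset H\), the classical rule
\[
 \frac{\dd}{\dd t}(w_n,z_n)_H=(w_n',z_n)_H+(z_n',w_n)_H
\]
holds pointwise. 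Here \((w_n',z_n)_H=\langle w_n',z_n\rangle\), because of the identification \(H\hookrightarrow X^*\). Integrating gives, for all \(s,t\),
\[
 (w_n(t),z_n(t))_H-(w_n(s),z_n(s))_H=\int_s^t\bigl(\langle w_n',z_n\rangle+\langle z_n',w_n\rangle\bigr)\dd\tau .
\]

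\emph{Step 2 (passage to the limit).} The integrand converges in \(L^1(0,T)\), since \(|\langle f,g\rangle|\le\|f\|_{X^*}\|g\|_X\) and Hölder's inequality with exponents \(p',p\) apply. The left side converges in \(L^1(0,T)\) as a function of \(t\), since \(w_n\to w\) and \(z_n\to z\) in \(L^2(0,T;H)\). Passing to an a.e. convergent subsequence in both \(s\) and \(t\), the identity holds for a.e. \(s,t\). Consequently \((w,z)_H\) coincides a.e. with the absolutely continuous function
\[
 c+\int_0^t\bigl(\langle w',z\rangle+\langle z',w\rangle\bigr)\dd\tau,
\]
which gives both the representative and the derivative formula.

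\emph{Step 3 (pointwise agreement).} The main technical point is to show that \((w_n(t),z_n(t))_H\) converges for every \(t\), or equivalently that \(w\) has an \(H\)-continuous representative. First apply the identity from Step 1 with \(z_n=w_n-w_m\) and \(w_n=w_n-w_m\), that is, to the difference with itself. Then average over \(s\in[0,T]\). This yields
\[
 \sup_t\|w_n(t)-w_m(t)\|_H^2\le \frac1T\|w_n-w_m\|_{L^2(H)}^2+2\|w_n'-w_m'\|_{L^{p'}(X^*)}\|w_n-w_m\|_{L^p(X)} .
\]
Hence \((w_n)\) is Cauchy in \(C([0,T];H)\), and its limit is a continuous representative of \(w\). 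The same holds for \(z\). Thus \((w_n,z_n)_H\to(w,z)_H\) uniformly for these representatives, and the identity holds for all \(s,t\). When \(w,z\in C([0,T];H)\) are given, they equal these continuous representatives everywhere, because two continuous functions that agree a.e. agree everywhere. Therefore \((w(t),z(t))_H\) coincides with the absolutely continuous representative at every time.

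The density in Step 1 is standard (for example, Lions–Magenes or Showalter). The dense, continuous embeddings \(X\hookrightarrow H\hookrightarrow X^*\) ensure that the pairing \(\langle\cdot,\cdot\rangle\) extends \((\cdot,\cdot)_H\) consistently, and this is exactly what Step 1 uses.
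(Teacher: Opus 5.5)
Your proof is correct. Even reflection plus time mollification gives valid $C^1([0,T];X)$ approximants, since the reflected function has the odd reflection of $w'$ as weak derivative because $w$ is absolutely continuous into $X^*$. Step~2 then yields the absolutely continuous representative and the derivative formula, and the averaged Cauchy estimate in Step~3 upgrades the convergence to $C([0,T];H)$, which gives the pointwise statement.

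The paper does not prove this proposition; it quotes it directly from \cite[Proposition~3.2]{RW2012}. Your regularization argument is the standard Lions--Magenes/Showalter-type proof of such product rules, so there is no different route in the paper to compare against.
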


\begin{lemma}[Uniform local existence time]
\label{lem:uniform-restart}
Let Assumption~\ref{ass:local} hold.
For every \(M>0\), there exists
\(\tau_M=\tau(M;f,p,\Omega)>0\) such that every
initial state
\((w_0,w_1)\in H_0^1(\Omega)\times L^2(\Omega)\)
satisfying
\begin{equation}\label{eq71}
 \|\nabla w_0\|_2^2+\|w_1\|_2^2\leq M
\end{equation}
admits a unique energy weak solution on
\([0,\tau_M]\).
\end{lemma}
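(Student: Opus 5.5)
The plan is to deduce Lemma~\ref{lem:uniform-restart} from the local theory of \cite{RW2012} (Theorem~\ref{thm:local}). The key point is that their existence time depends on the data only through an upper bound for the energy norm. Uniqueness comes directly from Theorem~\ref{thm:local}, so only the uniform existence time needs attention.

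\textbf{Step 1: a priori estimates on an approximation scheme.} Recall how the local solution is built in \cite{RW2012}. One truncates the source, $f_K(s)=f(s)$ for $|s|\le K$ and extended with globally Lipschitz growth beyond, and solves the truncated problem globally by monotone operator theory. This uses the maximal monotonicity of $u_t\mapsto-\Delta_pu_t$ from $W_0^{1,p}(\Omega)$ to $W^{-1,p'}(\Omega)$ together with the energy identity. One then derives an estimate uniform in $K$. Set
\[
\mathcal Y(t)=\|\nabla u(t)\|_2^2+\|u_t(t)\|_2^2 .
\]
Testing with $u_t$ and using $|f(s)|\le C(1+|s|^r)$ gives, in the supercritical range, a bound on $\int_\Omega f(u)u_t\,dx$. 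For $r\le5$ one uses the dissipation $\|\nabla u_t\|_p^p$ with $p>2$, H\"older, and the embedding $H_0^1\hookrightarrow L^6$; for $2<p<3$ one also uses interpolation and the restriction $r<8-6/p$ already noted after Assumption~\ref{ass:local}. The resulting inequality has the form
\[
\mathcal Y(t)+\int_0^t\|\nabla u_t\|_p^p\,ds\le \mathcal Y(0)+C\int_0^t\bigl(1+\mathcal Y(s)\bigr)^{q}\,ds
\]
for some $q>1$ depending only on $r,p$, with $C=C(f,p,\Omega)$.

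\textbf{Step 2: comparison.} The ODE comparison with $y'=C(1+y)^q$, $y(0)=M$, gives $\mathcal Y(t)\le 2M+1$ on $[0,\tau_M]$, where $\tau_M$ depends only on $M,q,C$. This bound holds uniformly in $K$. Consequently, for $K$ large, which is fixed by the embedding of the energy-space bound, the truncation is inactive in the limit passage.

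\textbf{Step 3: passage to the limit.} Pass $K\to\infty$ on $[0,\tau_M]$ by compactness (Aubin--Lions) and the monotonicity (Minty) argument for the damping term, exactly as in \cite[Section~3]{RW2012}. This produces a weak solution on $[0,\tau_M]$ satisfying the energy identity \eqref{eq3}.

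\textbf{Main obstacle.} The hardest step is Step 1 in the supercritical case $3<r\le5$. There one needs a source estimate of the form $\|f(u)\|$ in $W^{-1,p'}$ controlled by powers of $\mathcal Y$, absorbing part of it into the $\|\nabla u_t\|_p^p$ dissipation. If redoing this is too heavy, I would instead cite directly that the local existence time in \cite[Theorem~1.2]{RW2012} is stated to depend only on a bound for the initial energy norm, and check that all their constants depend only on $M,f,p,\Omega$.
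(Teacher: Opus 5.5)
Your proposal is essentially the paper's argument. The paper's proof is exactly your fallback: it traces the construction of \cite{RW2012} (approximating data bounded by $(\sqrt M+1)^2$, a common $H_0^1$-cutoff radius $K$ with $K^2>2(\sqrt M+1)^2$, Lipschitz constants of the approximating sources $f_m$ into $L^q$ uniform on that ball, and $f_m(0)=f(0)$), so that the estimates (2.36)--(2.37) there give an existence time depending only on $M$, with the energy identity and uniqueness taken from \cite[Theorem~1.2]{RW2012}.

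In your self-contained variant, drop the claim in Step~2 that a pointwise truncation $|s|\le K$ becomes inactive: an $H_0^1$ bound gives no $L^\infty$ bound in three dimensions, and only the $H_0^1$-norm cutoff of \cite{RW2012} becomes inactive. Rely instead on your Step~3 limit $K\to\infty$. Also, Step~1 needs no interpolation for $r\le5$: both $H_0^1(\Omega)$ and $W_0^{1,p}(\Omega)$ embed in $L^6(\Omega)$, which gives $\int_\Omega|f(u)u_t|\,dx\le C(1+\|\nabla u\|_2^r)\|\nabla u_t\|_p$ directly.
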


\begin{proof}
We follow the local construction in
\cite[Lemma~2.2 and Sections~2.4--2.5]{RW2012}.
For each initial state satisfying
\eqref{eq71}, choose the approximating
initial data as in \cite[Section~2.4]{RW2012}, with
\[
 \|\nabla w_{0,m}\|_2^2+\|w_{1,m}\|_2^2
 \leq(\sqrt M+1)^2.
\]
Fix a common cutoff radius \(K\) such that
\[
 K^2>2(\sqrt M+1)^2.
\]

By \cite[Lemma~2.4]{RW2012}, the approximating sources
\(f_m\) have local Lipschitz constants as maps from
\(H_0^1(\Omega)\) into \(L^q(\Omega)\) that are uniform
in \(m\) on the ball of radius \(K\), where \(q\) is
defined in \cite[(1.4)]{RW2012}.
Moreover, \(f_m(0)=f(0)\).
Consequently, the constants in the estimates
\cite[(2.36)--(2.37)]{RW2012} can be chosen independently
of \(m\) and of the particular initial state satisfying
\eqref{eq71}.
Since the quadratic initial energies are bounded by
\((\sqrt M+1)^2/2\), these estimates provide a common
existence time
\(\tau_M=\tau(M;f,p,\Omega)>0\)
for all the approximating problems.

The convergence argument in
\cite[Sections~2.4--2.5]{RW2012} then yields a weak
solution on \([0,\tau_M]\) for each such initial state.
The energy identity and uniqueness follow from
\cite[Theorem~1.2]{RW2012}, since \(r\leq5\).
\end{proof}

\begin{proof}[Proof of Corollary~\ref{cor:continuation}]
By local existence and uniqueness, solutions with the
same initial data agree on their common interval.
They therefore define a unique maximal solution
on \([0,\Tmax)\), with the stated regularity and
energy identity on every compact subinterval.

Suppose that \(\Tmax<\infty\) and
\[
 \limsup_{t\uparrow\Tmax}\mathcal E(t)<\infty.
\]
Then there exist \(t_*\in[0,\Tmax)\) and \(M>0\) such that
\[
 \|\nabla u(t)\|_2^2+\|u_t(t)\|_2^2\leq M,
 \quad \forall t\in[t_*,\Tmax).
\]
Let \(\tau_M>0\) be given by
Lemma~\ref{lem:uniform-restart}.
Choose \(t_0\in[t_*,\Tmax)\) such that
\(\Tmax-t_0<\tau_M\).
The lemma provides an energy weak solution \(w\)
on \([0,\tau_M]\) with
\[
 w(0)=u(t_0),\quad w_t(0)=u_t(t_0).
\]

Since the equation is autonomous, the time-translated function
\(z(t):=w(t-t_0)\) satisfies
\[
 z_{tt}-\Delta z-\Delta_p z_t=f(z)
\]
on \([t_0,t_0+\tau_M]\), with the same homogeneous Dirichlet
boundary condition and
\[
 z(t_0)=u(t_0),\quad z_t(t_0)=u_t(t_0).
\]
By uniqueness,
\[
 z(t)=u(t),\quad z_t(t)=u_t(t),
 \quad \forall t\in[t_0,\Tmax).
\]
Define
\[
 \widetilde u(t)=
 \begin{cases}
  u(t),&0\leq t\leq t_0,\\
 z(t),&t_0<t\leq t_0+\tau_M.
 \end{cases}
\]
The two pieces have the required regularity, and
their displacement and velocity agree at \(t_0\).
Consequently,
\[
 \widetilde u\in
 C([0,t_0+\tau_M];H_0^1(\Omega)),
 \quad
 \widetilde u_t\in
 C([0,t_0+\tau_M];L^2(\Omega))
 \cap L^p(0,t_0+\tau_M;W_0^{1,p}(\Omega)).
\]
Adding the weak formulations on the two intervals
gives the weak formulation for \(\widetilde u\),
since the endpoint terms at \(t_0\) cancel.
Likewise, adding the energy identities yields the
energy identity on the combined interval.
Hence \(\widetilde u\) is an energy weak solution
that agrees with \(u\) on \([0,\Tmax)\) and exists
up to \(t_0+\tau_M>\Tmax\).
This contradicts the definition of \(\Tmax\)
and proves the corollary.
\end{proof}

\section*{Acknowledgments}
AI tools were used for textual revision and polishing, and occasionally provided tentative proof directions for reference. All proof strategies and rigorous arguments were independently developed and validated by the authors, who take full responsibility for the manuscript.

\section*{Declaration of competing interest}

The authors declare that they have no known competing financial interests or personal relationships that could have appeared to influence the work reported in this paper.

\section*{Data availability}

Data sharing is not applicable to this article as no data sets were generated or analysed during the current study.

\end{document}